\documentclass[12pt,english]{amsart}
\usepackage{charter}
\usepackage[T1]{fontenc}
\usepackage[latin9]{inputenc}
\usepackage{babel}
\usepackage{enumitem}
\usepackage{amstext}
\usepackage{amsthm}
\usepackage{amssymb}
\usepackage{geometry}
\usepackage{setspace}
\usepackage[pdfusetitle,
 bookmarks=true,bookmarksnumbered=false,bookmarksopen=false,
 breaklinks=false,pdfborder={0 0 1},backref=false,colorlinks=false]
 {hyperref}

\makeatletter
\numberwithin{equation}{section}
\AtBeginDocument{}
\AtBeginDocument{
	\theoremstyle{plain}
	\newtheorem{question}[thm]{Question}
}

\usepackage{eucal}
\let\mathcal=\CMcal
\usepackage{dsfont}
\usepackage{graphicx}
\usepackage{mathtools} 

\setlist[enumerate,1]{label={(\arabic*)},ref={(\arabic*)}}
\setlist[enumerate,2]{label={\alph*.},ref={\alph*}}

\makeatother

\theoremstyle{plain}
\newtheorem{thmx}{\protect\theoremname}[section]
\newtheorem{thm}{\protect\theoremname}[section]
\theoremstyle{definition}
\newtheorem{defn}[thm]{\protect\definitionname}
\theoremstyle{remark}
\newtheorem{rem}[thm]{\protect\remarkname}
\theoremstyle{definition}
\newtheorem{example}[thm]{\protect\examplename}
\theoremstyle{plain}
\newtheorem{lem}[thm]{\protect\lemmaname}
\providecommand{\definitionname}{Definition}
\providecommand{\examplename}{Example}
\providecommand{\lemmaname}{Lemma}
\providecommand{\remarkname}{Remark}
\providecommand{\theoremname}{Theorem}

\begin{document}
\global\long\def\e{\varepsilon}%
\global\long\def\N{\mathbb{N}}%
\global\long\def\Z{\mathbb{Z}}%
\global\long\def\Q{\mathbb{Q}}%
\global\long\def\R{\mathbb{R}}%
\global\long\def\C{\mathbb{C}}%
\global\long\def\H{\mathcal{H}}%
\global\long\def\K{\mathcal{K}}%
\global\long\def\l{\lambda}%

\global\long\def\linspan{\operatorname{span}}%
\global\long\def\clinspan{\operatorname{\overline{span}}}%
\global\long\def\supp{\operatorname{supp}}%
\global\long\def\op{\mathrm{op}}%

\global\long\def\Rant{\text{\scalebox{1.15}{\ensuremath{\mathtt{R}}}}}%
\global\long\def\Sant{\text{\scalebox{1.15}{\ensuremath{\mathtt{S}}}}}%
\global\long\def\QG{\mathbb{G}}%
\global\long\def\G{\QG}%
\global\long\def\Cz{\mathrm{C}_{0}}%
\global\long\def\CzU{\mathrm{C}^{\mathrm{u}}_{0}}%
\global\long\def\CStar{\mathrm{C}^{*}}%
\global\long\def\VN{\operatorname{VN}}%
\global\long\def\Mor{\operatorname{Mor}}%
\global\long\def\M{\mathrm{M}}%
\global\long\def\Aut{\operatorname{Aut}}%
\global\long\def\CStarR{\mathrm{C}^{*}_{\mathrm{r}}}%
\global\long\def\tensormin{\otimes_{\mathrm{min}}}%
\global\long\def\tensorn{\mathbin{\overline{\otimes}}}%
\global\long\def\id{\mathrm{id}}%
\global\long\def\i{\id}%
\global\long\def\Cb{\mathrm{C}_{b}}%
\global\long\def\Cc{\mathrm{C}_{c}}%
\global\long\def\Ad{\operatorname{Ad}}%
\global\long\def\ev{\mathrm{ev}}%
\global\long\def\a{\alpha}%
\global\long\def\one{\mathds{1}}%
\global\long\def\d{\,\mathrm{d}}%
\global\long\def\dd{\mathrm{d}}%
\global\long\def\tensor{\otimes}%
\global\long\def\ot{\tensor}%
\global\long\def\Rieffel#1#2{#1^{#2}}%
\global\long\def\GPsi{\Rieffel{\G}{\Psi}}%
\global\long\def\GammaPsi{\Rieffel{\Gamma}{\Psi}}%
\global\long\def\GammaPsi{\Gamma^{\Psi}}%
\global\long\def\Linfty{L^{\infty}}%
\global\long\def\Lone{L^{1}}%
\global\long\def\Ltwo{L^{2}}%
\global\long\def\LoneS{L^{1}_{\sharp}}%
\global\long\def\Ww{\mathds{W}}%
\global\long\def\wW{\text{\reflectbox{\ensuremath{\Ww}}}\:\!}%

\global\long\def\pres#1#2#3{\prescript{#1}{#2}{#3}}%

\title[Amenability via convolution semigroups]{Amenability of locally compact quantum groups via the long-time behavior
of convolution semigroups}
\author{Ami Viselter}
\address{Department of Mathematics, Faculty of Natural Sciences, University
of Haifa, 3103301 Haifa, Israel}
\email{aviselter@univ.haifa.ac.il}
\begin{abstract}
We establish a simple characterization of amenability of second countable
locally compact quantum groups $\G$ in terms of the long-time behavior
of convolution semigroups of states on $\G$. We then investigate
the deeper problem of combining amenability of $\G$ with other approximation
properties of the dual quantum group $\hat{\G}$. Specifically, we
characterize the combination of (``strong'') amenability of $\G$
with either the failure of property (T) or the presence of the Haagerup
property for $\hat{\G}$. These results---which appear to be new
even in the classical setting of locally compact groups---produce
convolution semigroups on $\G$ whose behavior is controlled both
as time goes to zero \emph{and} as time goes to infinity.
\end{abstract}

\maketitle

\section{Introduction}

Approximation and rigidity properties such as amenability \cite{Greenleaf__book},
property (T) \cite{Kazhdan__T}, and the Haagerup property \cite{Haagerup__exam_nonnucl_C_star_alg_MAP}
have been fundamental to the understanding of locally compact groups
for several decades, serving as indispensable tools in geometric group
theory, operator algebras, and ergodic theory. Among the various characterizations
of property (T) and the Haagerup property, those formulated in terms
of semigroups of positive-definite functions (or, equivalently, in
terms of conditionally negative-definite functions or in terms of
1-cocycles) \cite[Theorems~3 and 10]{Akemann_Walter__unb_neg_def_func}
stand out for their depth and utility: see, e.g., the Delorme--Guichardet
theorem and its applications in \cite[Sections~2.12 and 6.3]{Bekka_de_la_Harpe_Valette__book},
and various examples and applications throughout \cite{Cherix_Cowling_Jolissaint_Julg_Valette__book}
and in the discrete case in \cite{Brown_Ozawa__book}. Essentially,
under a countability assumption, the negation of property (T) and
the presence of the Haagerup property are equivalent to the existence
of an unbounded and a proper conditionally negative-definite function
(or 1-cocycle) on the group, respectively. These characterizations
are valuable in both directions, namely in order to establish the
property in question or to exploit it in a reinforced form.

All of the aforementioned approximation and rigidity properties of
locally compact groups (as well as others) were successfully extended
to locally compact \emph{quantum} groups in the sense of Kustermans
and Vaes \cite{Kustermans_Vaes__LCQG_C_star,Kustermans_Vaes__LCQG_von_Neumann},
see \cite{Daws_Fima_Skalski_White_Haagerup_LCQG}. Furthermore, the
notion of semigroups of positive-definite functions on groups, also
mentioned above, generalizes as follows \cite{Lindsay_Skalski__conv_semigrp_states}
(see also \cite{Lindsay_Skalski__quant_stoch_conv_cocyc_2,Lindsay_Skalski__quant_stoch_conv_cocyc_3}).
A \emph{convolution semigroup} of states on a locally compact quantum
group $\G$ is a one-parameter family $\left(\mu_{t}\right)_{t\ge0}$
of ``probability measures on $\G$'', that is, states of the universal
$C^{*}$-algebra $\CzU(\G)$, satisfying $\mu_{t+s}=\mu_{t}\star\mu_{s}$
(where $\star$ denotes the convolution), $\mu_{0}=\epsilon$, and
$w^{*}$-continuity at $t=0^{+}$. Under this definition, semigroups
of (normalized) positive-definite functions on a locally compact group
$G$ correspond precisely to convolution semigroups on its quantum
dual $\G:=\hat{G}$.

In this broader quantum context, the convolution semigroup characterizations
of property (T) and the Haagerup property were established for arbitrary
locally compact quantum groups: this is the content of the main results
of \cite[Section~4]{Skalski_Viselter__convolution_semigroups} (see
also \cite{Skalski_Viselter__generating_functionals}), which were
known earlier only for discrete quantum groups \cite{Daws_Fima_Skalski_White_Haagerup_LCQG,Daws_Skalski_Viselter__prop_T,Kyed__cohom_prop_T_QG}
(see \cite{Daws_Skalski_Viselter__prop_T} also for applications).

The initial objective of this paper was to provide a convolution semigroup
characterization of amenability parallel to those for (non-) property
(T) and the Haagerup property, as follows:
\begin{thmx}[Theorem~\ref{thm:amen_chars} and its proof]
\label{thm:A}Let $\G$ be a second countable locally compact quantum
group. Then $\G$ is amenable if and only if there exists a \emph{norm-continuous}
convolution semigroup of states $\left(\mu_{t}\right)_{t\ge0}$ on
$\G$ such that $\omega\star\mu_{t}-\mu_{t}\xrightarrow[t\to\infty]{}0$
in the $\Lone(\G)$-norm for every state $\omega\in\Lone(\G)$.
\end{thmx}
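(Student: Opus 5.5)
The plan is to prove the two implications separately. The whole argument runs through the Reiter-type characterization of amenability of $\G$: $\G$ is amenable if and only if there is a net of states $(\nu_{i})$ in $\Lone(\G)$ with $\|\omega\star\nu_{i}-\nu_{i}\|\to0$ for every state $\omega\in\Lone(\G)$. When $\G$ is second countable, $\Lone(\G)$ is separable, so a diagonal argument over a countable dense set of states lets us take this net to be a sequence. We may also assume the convergence is uniform on norm-compact sets of states, by the usual convexity (Day/Namioka) argument.

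\emph{Sufficiency.} Let $(\mu_{t})$ be as in the statement, and fix a normal state $\nu\in\Lone(\G)$. Since $\Lone(\G)$ is an ideal in $\CzU(\G)^{*}$, each $\nu_{t}:=\nu\star\mu_{t}$ is a normal state. For any state $\omega\in\Lone(\G)$ we have
\[
\omega\star\nu_{t}-\nu_{t}=\bigl((\omega\star\nu)\star\mu_{t}-\mu_{t}\bigr)-\bigl(\nu\star\mu_{t}-\mu_{t}\bigr).
\]
Both brackets tend to $0$ in norm by hypothesis, because $\omega\star\nu$ and $\nu$ are normal states. Hence $(\nu_{t})_{t\to\infty}$ is a Reiter net. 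Any $w^{*}$-cluster point of it in $\Linfty(\G)^{*}$ is a left invariant mean, so $\G$ is amenable. This direction is routine.

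\emph{Necessity.} I would take a compound Poisson semigroup. Pick a normal state $\varphi\in\Lone(\G)$ and set
\[
\mu_{t}:=e^{-t}\sum_{k\ge0}\frac{t^{k}}{k!}\varphi^{\star k},\qquad \varphi^{\star0}:=\epsilon .
\]
This is a norm-continuous convolution semigroup of states, namely $\exp(t(\varphi-\epsilon))$ in the Banach algebra $\CzU(\G)^{*}$. The difference $\omega\star\mu_{t}-\mu_{t}$ is the Poisson$(t)$-average of the terms $\omega\star\varphi^{\star k}-\varphi^{\star k}$. Poisson$(t)$ concentrates on $k\approx t\to\infty$, and each term has norm at most $2$. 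So it suffices to produce $\varphi$ with the quantum Rosenblatt/Kaimanovich--Vershik property: $\|\omega\star\varphi^{\star k}-\varphi^{\star k}\|\to0$ as $k\to\infty$ for every normal state $\omega$. By density and the bound $\le2$, it is enough to check this for $\omega$ in a countable dense set $\{\omega^{(j)}\}$.

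\emph{Constructing $\varphi$ (main obstacle).} I would follow Rosenblatt's scheme and take $\varphi=\sum_{n}a_{n}\psi_{n}$. Here the $a_{n}>0$ sum to $1$ and decay slowly enough (e.g.\ $a_{n}\asymp 1/(n(n+1))$) that, for large $k$, with probability close to $1$ a $k$-fold product contains a "fresh" large index. The $\psi_{n}$ are Reiter states chosen inductively: $\psi_{m}$ is chosen so that $\|\theta\star\psi_{m}-\psi_{m}\|<2^{-m}$ uniformly for $\theta$ in a norm-compact set $K_{m}$. Expanding $\varphi^{\star k}=\sum a_{n_{1}}\cdots a_{n_{k}}\,\psi_{n_{1}}\star\cdots\star\psi_{n_{k}}$, consider a word and let $m$ be the position of its largest index. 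The left portion $\omega^{(j)}\star\psi_{n_{1}}\star\cdots$ preceding $\psi_{m}$ must lie in $K_{m}$, and then $\psi_{m}$ absorbs it up to error $2^{-m}$. The right-hand tail is harmless, since convolving on the right with a state is a contraction.

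The difficulty is that the set of all such left portions is not compact. I would handle this by truncation. Take $K_{m}$ to be the convex hull of $\omega^{(j)}\star(\text{products of }\psi_{1},\dots,\psi_{m-1}\text{ of length}\le L_{m})$ for $j\le m$; it is compact because it is the convex hull of finitely many elements. The lengths $L_{m}$ and the weights $a_{n}$ are then chosen together so that the Poisson/binomial mass of words in which the first occurrence of a new maximal index comes after more than $L_{m}$ letters is negligible. Balancing $a_{n}$ against $L_{m}$ is exactly where I expect the real work. If this becomes too delicate, the fallback is to replace $\varphi^{\star k}$ by Ces\`aro-type averages, which Poisson averaging already provides, and to use a quantum version of the Kaimanovich--Vershik entropy/Reiter argument.
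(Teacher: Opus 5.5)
Your reduction is the paper's proof of Theorem~\ref{thm:amen_chars}. Sufficiency is routine: your left-convolution argument is fine, while the paper right-convolves instead, using $\|\omega\star\mu_t\star\omega_0-\mu_t\star\omega_0\|\le\|\omega\star\mu_t-\mu_t\|$ (Remark~\ref{rem:amen_Lone_CzU_star}). For necessity the paper also uses the compound Poisson semigroup $e^{-t}\sum_k\frac{t^k}{k!}\varphi^{\star k}$ and splits the Poisson average at a fixed $n_0$. So the whole content of the hard direction is the existence of a state $\varphi\in\Lone(\G)$ with $\|\omega\star\varphi^{\star k}-\varphi^{\star k}\|\to0$ for every normal state $\omega$. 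This is where your proposal, as written, has a gap: you outline a construction but explicitly leave its decisive estimate open, with only a vague entropy fallback. The paper does not construct $\varphi$. It quotes Theorem~\ref{thm:KNR__amen_by_single_state} (Kalantar--Neufang--Ruan, the quantum Rosenblatt/Kaimanovich--Vershik theorem), which is precisely this statement for second countable amenable $\G$. You need either that citation or a completed construction.

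Your Rosenblatt-type scheme does close, and the ``balancing'' is not delicate. One correction first. Write a word as $\ell\star\psi_m\star r$, with $\psi_m$ the first occurrence of the largest letter. You bound $\|\omega\star\ell\star\psi_m-\ell\star\psi_m\|$ by $\|\omega\star\ell\star\psi_m-\psi_m\|+\|\ell\star\psi_m-\psi_m\|$, so $K_m$ must contain the bare prefixes $\ell$ as well as $\omega^{(j)}\star\ell$. Let $K_m$ be the \emph{finite} set of all nonempty $\ell$ and all $\omega^{(j)}\star\ell$ ($j\le m$, $\ell$ possibly empty), where $\ell$ ranges over products of at most $L_m:=m^2$ letters from $\psi_1,\dots,\psi_{m-1}$. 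Take $\psi_m$ from the Reiter net with $\|\theta\star\psi_m-\psi_m\|<2^{-m}$ for all $\theta\in K_m$; no convex hulls, compactness, Day--Namioka or sequences are needed. Fix $a_n:=\frac{1}{n(n+1)}$, so that $\sum_{n\ge m}a_n=\frac{1}{m}$. If a $k$-letter word has largest letter $M\ge\sqrt{k}$, its prefix has length $<k\le M^2=L_M$. So for $k\ge j^2$ that word contributes an error of at most $2\cdot 2^{-M}\le 2^{1-\sqrt{k}}$. The words with $M<\sqrt{k}$ have total weight $(1-1/\lceil\sqrt{k}\rceil)^k\le e^{-\sqrt{k}/2}$. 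Hence $\|\omega^{(j)}\star\varphi^{\star k}-\varphi^{\star k}\|\le 2^{1-\sqrt{k}}+2e^{-\sqrt{k}/2}$ for $k\ge j^2$, and density of $\{\omega^{(j)}\}$ together with the bound $2$ finishes the argument. With this (or the citation) your proof is complete and coincides with the paper's.
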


That such an asymptotically left invariant convolution semigroup can
be chosen to be norm continuous (rather than merely $w^{*}$-continuous)
is natural, because the asymptotic invariance reflects the behavior
as $t\to\infty$, whereas the type of continuity is governed by the
behavior of the convolution semigroup as $t\to0^{+}$. However, norm-continuous
convolution semigroups are analytically trivial, making it desirable
to construct asymptotically left invariant convolution semigroups
that are $w^{*}$-continuous yet not norm continuous. The immediate
obstruction to constructing such semigroups is property (T) for the
dual $\hat{\G}$, which precisely forces every $w^{*}$-convergent
net of states of $\CzU(\G)$ converging to $\epsilon$ to converge
in norm. It is therefore natural to ask whether a non-trivial, asymptotically
left invariant convolution semigroup as above exists whenever $\G$
is amenable and $\hat{\G}$ lacks property (T). While this question
is expected to be rather difficult and remains open, we resolve positively
its analog for ``strong amenability'' (that is, co-amenability of
the dual), as well as the counterpart where the failure of property
(T) is replaced by the Haagerup property, as follows. Remark that
for classical locally compact groups, the notions of amenability and
strong amenability coincide.
\begin{thmx}[Theorem~\ref{thm:strong_amen_dual_non_T_Haagerup_chars}]
\label{thm:B}Let $\G$ be a second countable locally compact quantum
group. Then $\G$ is strongly amenable and $\hat{\G}$ does not have
property (T), respectively $\hat{\G}$ has the Haagerup property,
if and only if there exists a convolution semigroup of states $\left(\mu_{t}\right)_{t\ge0}$
on $\G$ such that $\tilde{\lambda}(\mu_{t})$ has norm 1 for each
$t\ge0$ and $\tilde{\lambda}(\omega\star\mu_{t}-\mu_{t})\xrightarrow[t\to\infty]{}0$
in the operator norm for every state $\omega\in\Lone(\G)$, and $\left(\mu_{t}\right)_{t\ge0}$
is ($w^{*}$-continuous but) not norm continuous, respectively $\tilde{\lambda}(\mu_{t})\in\Cz(\hat{\G})$
for each $t>0$.
\end{thmx}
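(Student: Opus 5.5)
For the forward direction of Theorem~\ref{thm:B} I would use two known convolution semigroups and combine them. Since $\hat{\G}$ lacks property (T), respectively has the Haagerup property, the characterizations of \cite[Section~4]{Skalski_Viselter__convolution_semigroups} give a $w^{*}$-continuous convolution semigroup $(\nu_{t})_{t\ge0}$ of states on $\CzU(\G)$. In the first case it is not norm continuous. In the second case $\tilde{\lambda}(\nu_{t})\in\Cz(\hat{\G})$ for $t>0$.

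For the behaviour as $t\to\infty$ I would use the proof of Theorem~\ref{thm:A}. Strong amenability gives a norm-continuous semigroup $\mu_{t}^{0}=\exp(t(\omega_{0}-\epsilon))$, where $\omega_{0}$ is a suitable state. Its Poisson-type averages are asymptotically left invariant. Because $\G$ is strongly amenable, $\tilde{\lambda}$ is isometric on the relevant states, so $\|\tilde{\lambda}(\mu_{t})\|=1$ for each $t\ge0$.

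The combined semigroup would be $\mu_{t}:=\mu_{t}^{0}\star\nu_{t}$. For this to be a convolution semigroup, the generators must commute. I would arrange that by choosing $\omega_{0}$ in the commutant, for instance by averaging $\omega_{0}$ against $\nu_{s}$. If commuting cannot be ensured, I would instead build $\mu_{t}$ from a single generating functional $L=L_{\omega_{0}}+L_{\nu}$ and invoke the generating-functional theory of \cite{Skalski_Viselter__generating_functionals}.

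I would then check the required properties of $\mu_{t}$:
\begin{itemize}
\item \emph{Failure of norm continuity.} This persists because convolving with the norm-continuous factor $\mu_{t}^{0}$ perturbs $\mu_{t}$ by $O(t)$ in norm.
\item \emph{The $\Cz(\hat{\G})$ condition.} We have $\tilde{\lambda}(\mu_{t})=\tilde{\lambda}(\mu_{t}^{0})\tilde{\lambda}(\nu_{t})$, and $\Cz(\hat{\G})$ is an ideal in its multiplier algebra, so the product stays in $\Cz(\hat{\G})$.
\item \emph{Asymptotic invariance.} We have $\tilde{\lambda}(\omega\star\mu_{t}-\mu_{t})=\tilde{\lambda}(\omega\star\mu_{t}^{0}-\mu_{t}^{0})\tilde{\lambda}(\nu_{t})$, which tends to $0$ because $\|\tilde{\lambda}(\nu_{t})\|\le1$. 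Only operator-norm control on the reduced level is needed here.
\end{itemize}

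The remaining difficulty is the norm-one condition $\|\tilde{\lambda}(\mu_{t})\|=1$ when $\hat{\G}$ is Haagerup. Then $\tilde{\lambda}(\nu_{t})\in\Cz(\hat{\G})$, and such elements typically have norm $<1$ in the non-compact case, so the naive product fails. I would therefore not let $\nu_{t}$ act at full strength. Instead I would rescale time, $\mu_{t}=\mu_{t}^{0}\star\nu_{\phi(t)}$ with slowly growing $\phi$, which is not a semigroup. To stay within semigroups, the fallback is to design the generator so that the Haagerup part is ``small at infinity'': choose the conditionally negative-definite part $\psi$ with $\inf\psi=0$ but proper. Then $\tilde{\lambda}(\mu_{t})=e^{-t\psi}$ on the dual still has norm $1$ while lying in $\Cz$. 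Asymptotic invariance would come from coupling this with amenability through a Reiter-type sequence $\xi_{n}$ in $\Ltwo(\G)$, defining a generator as a weighted sum $\sum_{n}2^{-n}(\omega_{\xi_{n}}-\epsilon)$ plus the proper part. This simultaneous construction is the main obstacle.

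For the converse, a semigroup with $\|\tilde{\lambda}(\mu_{t})\|=1$ and asymptotic invariance of $\tilde{\lambda}(\mu_{t})$ yields a net of states whose reduced images are approximately left invariant in operator norm. A weak$^{*}$ cluster point of these states, composed with $\tilde{\lambda}$, gives a state on $\Cz(\hat{\G})$-type algebras witnessing co-amenability of $\hat{\G}$, that is, strong amenability of $\G$. The statements about $\hat{\G}$ follow directly from \cite[Section~4]{Skalski_Viselter__convolution_semigroups}.
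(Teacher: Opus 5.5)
Your combination step has a genuine gap. It works only in the co-commutative case; the paper itself uses exactly your product for $\G=\hat G$ in the discussion after Question~\ref{ques:amen}. In general the semigroups $(\mu^{0}_{t})_{t\ge0}$ and $(\nu_{t})_{t\ge0}$ need not commute, so $\mu^{0}_{t}\star\nu_{t}$ is not a convolution semigroup. ``Averaging $\omega_{0}$ against $\nu_{s}$'' gives no way to produce a functional that commutes with every $\nu_{s}$ and still carries asymptotic invariance.

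Your fallback of adding generators is the right object, and it is what the paper does. With $\tilde{\lambda}(\nu_{t})=e^{-tA_{1}}$ and $\tilde{\lambda}(\mu^{0}_{t})=e^{-tA_{2}}$, where $A_{2}$ is bounded, one wants $\tilde{\lambda}(\mu_{t})=e^{-t(A_{1}+A_{2})}$. But then all your checks lose their footing. Each uses the factorization $\tilde{\lambda}(\mu_{t})=\tilde{\lambda}(\mu^{0}_{t})\tilde{\lambda}(\nu_{t})$, which fails when $A_{1}$ and $A_{2}$ do not commute. Three ingredients are missing.

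First, you must realize the perturbed semigroup as a convolution semigroup of \emph{states}. The paper applies the Trotter product formula to $(R^{\mathrm{u}}_{\mu^{i}_{t}})_{t\ge0}$ on $\CzU(\G)$, which is legitimate since one generator is bounded. This gives a $C_{0}$-semigroup of contractive completely positive maps commuting with all $L^{\mathrm{u}}_{\nu}$. Theorem~\ref{thm:SV_JMPA_3.2_3.3_3.4} then yields only contractive positive functionals, and non-norm-continuity follows because the generator is unbounded. That the $\mu_{t}$ are states is proved by running Trotter in the strict topology of $\M(\Cz(\hat{\G}))$ and applying the co-unit $\hat{\epsilon}$.

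Second, and most important, is asymptotic invariance. With $C:=\tilde{\lambda}(\omega)-\one$ you know $\Vert Ce^{-tA_{2}}\Vert\to0$ and need $\Vert Ce^{-t(A_{1}+A_{2})}\Vert\to0$. Since the exponential is not operator monotone, $A_{2}\le A_{1}+A_{2}$ does not give $e^{-t(A_{1}+A_{2})}\le e^{-tA_{2}}$. The paper bridges this with the spectral Lemma~\ref{lem:C_e_to_minus_tA_goes_to_zero}. It shows $\Vert Ce^{-tA}\Vert\to0\iff\Vert CE_{A}([0,r])\Vert\to0$ as $r\to0^{+}$. It also proves $\Vert E_{A}([r_{2},\infty))E_{\mathbf{A}}([0,r_{1}])\Vert\le\sqrt{r_{1}/r_{2}}$ whenever $A\le\mathbf{A}$.

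Third, membership in $\Cz(\hat{\G})$ no longer follows from the ideal property. The paper expands $e^{-t(A_{1}+A_{2})}$ by Dyson--Phillips, with $S_{n+1}(t)=\int^{t}_{0}S_{n}(t-s)A_{2}e^{-sA_{1}}\d s$. It shows each term is annihilated in norm by $\one-e_{j}$ for an approximate identity $(e_{j})$ of $\Cz(\hat{\G})$, splitting the integral at a small $t_{0}$.

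The ``remaining difficulty'' you single out is not real. If $\hat{\G}$ is co-amenable, $\hat{\epsilon}$ is a character of $\Cz(\hat{\G})$ whose strict extension satisfies $\hat{\epsilon}(\tilde{\lambda}(\mu))=\mu(\one)$. So every state $\mu$ has $\Vert\tilde{\lambda}(\mu)\Vert=1$, whether or not $\tilde{\lambda}(\mu)\in\Cz(\hat{\G})$. Classically, a normalized positive-definite function in $\Cz(G)$ has sup-norm $1$. You note this yourself in your second paragraph. So the time rescaling, the ``$\inf\psi=0$'' device and the Reiter-sum generator are unnecessary, and the $\psi$-language only makes sense classically. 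What the non-commutative construction actually needs is the proof that the $\mu_{t}$ are states, as described above.

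For the converse, a $w^{*}$-cluster point of the $\mu_{t}$ in $\CzU(\G)^{*}$ does not directly produce a functional on $\Cz(\hat{\G})$. The clean argument uses the norm-one operators $X_{t}:=\tilde{\lambda}(\mu_{t})$, which satisfy $\Vert\tilde{\lambda}(\omega)X_{t}-\omega(\one)X_{t}\Vert\to0$. These force $\vert\omega(\one)\vert\le\Vert\tilde{\lambda}(\omega)\Vert$ for $\omega\in\Lone(\G)$. Hence $\tilde{\lambda}(\omega)\mapsto\omega(\one)$ extends to a bounded functional on $\Cz(\hat{\G})$, which is co-amenability of $\hat{\G}$.
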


Here $\tilde{\lambda}:\CzU(\G)^{*}\to\M(\Cz(\hat{\G}))$ denotes a
``twisted'' Fourier transform. The condition $\Vert\tilde{\lambda}(\mu_{t})\Vert=1$
ensures that the asymptotic left invariance is non-trivial.

The primary strength of Theorem~\ref{thm:B} lies in constructing,
under two sets of hypotheses, convolution semigroups with desirable
features both as $t\to0^{+}$ \emph{and} as $t\to\infty$.

To the best of our knowledge, both Theorems~\ref{thm:A} and \ref{thm:B}
are new even in the classical setting of locally compact \emph{groups}
$G$. In this setting, convolution semigroups have been studied extensively
\cite{Berg_Forst__book,Heyer__book_prob_meas}, and are known to be
in one-to-one correspondence with L{\'e}vy processes on the group
\cite{Liao__book}; the quantum dual $\hat{G}$ automatically has
the Haagerup property, and fails property (T) provided that $G$ is
not discrete; and moreover, $\tilde{\lambda}$ reduces to the ordinary
Fourier transform from the measure algebra $M(G)\cong\Cz(G)^{*}$
into the multiplier algebra $\M(\Cz(\hat{G})=\CStarR(G))$ of the
reduced group $C^{*}$-algebra of $G$.

\section{Preliminaries}

\subsection{Locally compact quantum groups}

Unless mentioned otherwise, the following details are taken from \cite{Kustermans__LCQG_universal,Kustermans_Vaes__LCQG_C_star,Kustermans_Vaes__LCQG_von_Neumann};
see also \cite{Van_Daele__LCQGs}.

A \emph{locally compact quantum group} in the sense of Kustermans
and Vaes, defined in the von Neumann algebraic setting, is a pair
$\G:=\left(\Linfty(\G),\Delta\right)$, where $\Linfty(\G)$ is a
von Neumann algebra and $\Delta:\Linfty(\G)\to\Linfty(\G)\tensorn\Linfty(\G)$
is a co-multiplication: a faithful, unital, normal $*$-homomorphism
that is co-associative in the sense that $(\Delta\tensor\id)\circ\Delta=(\id\tensor\Delta)\circ\Delta$,
for which there exist left and right Haar weights (which are \emph{a
posteriori} unique up to scaling). We denote by $\Ltwo(\G)$ the GNS
Hilbert space of $\Linfty(\G)$ with respect to the left Haar weight
$\varphi$. Two other algebras pertaining to $\G$ are the reduced
$C^{*}$-algebra $\Cz(\G)$, which is WOT-dense in $\Linfty(\G)$,
and the universal $C^{*}$-algebra $\CzU(\G)$, admitting a canonical
surjective $*$-homomorphism $\CzU(\G)\to\Cz(\G)$. This allows us
to treat the predual $\Lone(\G):=\left(\Linfty(\G)\right)_{*}$ as
embedded in $\Cz(\G)^{*}$ (by restriction) and the latter as embedded
in $\CzU(\G)^{*}$ (by composing with the canonical surjection $\CzU(\G)\to\Cz(\G)$),
and we will do so henceforth. Each of $\Cz(\G)$ and $\CzU(\G)$ admits
its own version of the co-multiplication. For instance, we have the
universal co-multiplication $\Delta^{\mathrm{u}}:\CzU(\G)\to\M(\CzU(\G)\tensormin\CzU(\G))$,
where $\M(\cdot)$ stands for the multiplier algebra. Composition
with the (pre-) adjoint of the respective co-multiplication induces
the \emph{convolution} on each one of the Banach spaces $\Lone(\G),\Cz(\G)^{*},\CzU(\G)^{*}$.
For instance, given $\omega_{1},\omega_{2}\in\CzU(\G)^{*}$, we define
their convolution $\omega_{1}\star\omega_{2}\in\CzU(\G)^{*}$ to be
$(\omega_{1}\tensor\omega_{2})\circ\Delta^{\mathrm{u}}$, where, as
customary, the bounded functional $\omega_{1}\tensor\omega_{2}$ is
tacitly extended strictly from $\CzU(\G)\tensormin\CzU(\G)$ to its
multiplier algebra. With these convolutions each of $\Lone(\G),\Cz(\G)^{*},\CzU(\G)^{*}$
becomes a Banach algebra and a closed ideal in the ``bigger'' algebras.
The algebra $\left(\CzU(\G)^{*},\star\right)$ is unital; its unit
is called the co-unit and denoted by $\epsilon$.

The two factors in the ``polar decomposition'' of the antipode are
the unitary antipode $\Rant$ and the (analytic generator of) the
scaling group $\left(\tau_{t}\right)_{t\in\R}$. Each of these objects
in fact has three versions, one for each of the algebras $\Linfty(\G),\Cz(\G),\CzU(\G)$,
which fit each other as one would expect. A functional $\omega\in\CzU(\G)^{*}$
is called \emph{symmetric} if it is invariant under the (universal
version of the) unitary antipode $\Rant^{\mathrm{u}}$, i.e., if $\omega=\omega\circ\Rant^{\mathrm{u}}$. 

\emph{Duality} is the key basic feature of the theory: every locally
compact quantum group $\G$ possesses a dual locally compact quantum
group denoted by $\hat{\G}$. Objects associated with $\hat{\G}$
are decorated with hats, e.g.~$\hat{\epsilon}\in\CzU(\hat{\G})^{*}$.

The simplest examples of locally compact quantum groups are just locally
compact groups $G$. In this case, each of $\Linfty(G),\Cz(G)$ has
its usual meaning, $\CzU(G)$ equals $\Cz(G)$, the co-multiplication
is composition with the multiplication map (e.g., for $f\in\Cz(G)$,
the function $\Delta(f)\in\M(\Cz(G)\tensormin\Cz(G))\cong\Cb(G\times G)$
is given by $(\Delta(f))(t,s)=f(t\cdot s)$, $t,s\in G$), the Banach
algebra $(\Cz(G)^{*},\star)$ is just the measure algebra, and the
(unitary) antipode is given by composition with the inverse map. As
for the locally compact quantum group dual $\hat{G}$ of $G$, $\Linfty(\hat{G})$
is the group von Neumann algebra of $G$ generated by the canonical
generators $\left(\lambda_{g}\right)_{g\in G}$, $\Cz(\hat{G})$ is
the reduced group $C^{*}$-algebra of $G$, $\CzU(\hat{G})$ is the
full group $C^{*}$-algebra of $G$, the co-multiplication on $\Linfty(\hat{G})$
takes each canonical generator $\lambda_{g}$, $g\in G$, to $\lambda_{g}\tensor\lambda_{g}$,
and the Banach algebra $(\CzU(\hat{G})^{*},\star)$ is just the Fourier--Stieltjes
algebra \cite{Eymard__Fourier_alg}.

We will require the ordinary Fourier transform $\lambda$ and its
``twisted'' version $\tilde{\lambda}$. Denote by $\Ww\in\M(\CzU(\G)\tensormin\Cz(\hat{\G}))$
the version of the left regular representation of $\G$ that is ``half-universal''
on the left. For $\mu\in\CzU(\G)^{*}$ we set $\l(\mu):=\left(\mu\tensor\i\right)(\Ww)\in\M(\Cz(\hat{\G}))$
and $\tilde{\lambda}(\mu):=\hat{\tau}_{i/4}\left(\lambda(\mu)\right)\in\M(\Cz(\hat{\G}))$.
The latter equals $\big(\widetilde{R}^{(2,\varphi)}_{\overline{\mu}}\big)^{*}$
in the notation of \cite[Lemma~2.14]{Skalski_Viselter__convolution_semigroups}
(which also explains why it is well-defined). Then $\lambda,\tilde{\lambda}:(\CzU(\G)^{*},\star)\to\M(\Cz(\hat{\G}))$
are contractive Banach algebra homomorphisms, and by the above-mentioned
lemma we have $\Vert\tilde{\lambda}(\mu)\Vert\le\max(\left\Vert \lambda(\mu)\right\Vert ,\left\Vert \lambda(\overline{\mu})\right\Vert )$.
In particular, if $\mu$ is hermitian then $\Vert\tilde{\lambda}(\mu)\Vert\le\left\Vert \lambda(\mu)\right\Vert $. 

A locally compact quantum group $\G$ is called \emph{second countable}
if $\Cz(\G)$ is separable (folklore, see \cite[Definition~1.18 and Proposition~1.19]{Skalski_Viselter__convolution_semigroups}).
This is equivalent to separability of $\Ltwo(\G)$, thus also to separability
of $\Lone(\G)$.

\subsection{Convolution semigroups}
\begin{defn}[\cite{Lindsay_Skalski__conv_semigrp_states}]
A \emph{convolution semigroup} of contractive positive linear functionals
on a locally compact quantum group $\G$ (more explicitly, on $\CzU(\G)$)
is a one-parameter family $\left(\mu_{t}\right)_{t\ge0}$ of contractive
positive linear functionals on $\CzU(\G)$ satisfying $\mu_{t+s}=\mu_{t}\star\mu_{s}$
for all $t,s\ge0$ and $\mu_{0}=\epsilon$. We say that $\left(\mu_{t}\right)_{t\ge0}$
is \emph{$w^{*}$-continuous} if $\mu_{t}\xrightarrow[t\to0^{+}]{}\epsilon$
in the $w^{*}$-topology, i.e.~$\mu_{t}(a)\xrightarrow[t\to0^{+}]{}\epsilon(a)$
for all $a\in\CzU(\G)$. We say that $\left(\mu_{t}\right)_{t\ge0}$
is \emph{symmetric} if each of its elements is symmetric.
\end{defn}

The analytically trivial convolution semigroups of states on $\G$
are those that are \emph{norm continuous}, i.e.~$\mu_{t}\xrightarrow[t\to0^{+}]{}\epsilon$
in the norm topology of $\CzU(\G)^{*}$. By \cite[Proposition~2.3]{Lindsay_Skalski__conv_semigrp_states}
and \cite[Section~6]{Lindsay_Skalski__quant_stoch_conv_cocyc_3} (see
also \cite[Remark~2.10]{Skalski_Viselter__generating_functionals}),
these are precisely the convolution semigroups of the form $\left(\exp_{\star}(t\gamma):=\sum^{\infty}_{n=0}\frac{t^{n}}{n!}\gamma^{\star n}\right)_{t\ge0}$
(exponentiation in the unital Banach algebra $\left(\CzU(\G)^{*},\star\right)$),
where $\gamma\in\CzU(\G)^{*}$ is a normalized, hermitian, conditionally
positive functional; here conditional positivity means positivity
on $\ker\epsilon$, that is: $\gamma(a)\ge0$ for all $a\in\CzU(\G)_{+}\cap\ker\epsilon$,
and being normalized means that the strict extension of $\gamma$
to $\M(\CzU(\G))$ vanishes at the unit $\one$. In turn, these functionals
$\gamma$ are precisely those of the form $\gamma=s(\mu-\epsilon)$,
where $\mu$ is a state of $\CzU(\G)$ and $s\ge0$. Classically,
these convolution semigroups are the families of distributions of
compound Poisson processes.

Of key importance to our methods is the following. For $\mu\in\CzU(\G)^{*}$,
define the convolution operators $L^{\mathrm{u}}_{\mu},R^{\mathrm{u}}_{\mu}:\CzU(\G)\to\CzU(\G)$
by $R^{\mathrm{u}}_{\mu}:=(\mu\tensor\i)\circ\Delta^{\mathrm{u}}$
and $L^{\mathrm{u}}_{\mu}:=(\i\tensor\mu)\circ\Delta^{\mathrm{u}}$,
and note that these operators commute and $\left\Vert L^{\mathrm{u}}_{\mu}\right\Vert =\left\Vert R^{\mathrm{u}}_{\mu}\right\Vert =\left\Vert \mu\right\Vert $.
\begin{thm}[{\cite[parts of Theorem~3.2, Remark~3.3 and Theorem~3.4]{Skalski_Viselter__convolution_semigroups}}]
\label{thm:SV_JMPA_3.2_3.3_3.4}Let $\G$ be a locally compact quantum
group. Then there exists a one-to-one correspondence between $w^{*}$-continuous
convolution semigroups of contractive positive functionals $\left(\mu_{t}\right)_{t\ge0}$
on $\G$ and $C_{0}$-semigroups $\left(T^{\mathrm{u}}_{t}\right)_{t\ge0}$
of contractive, completely positive maps on $\CzU(\G)$ that commute
with the operators $L^{\mathrm{u}}_{\nu}$, $\nu\in\CzU(\G)^{*}$.
The correspondence is given by $T^{\mathrm{u}}_{t}:=R^{\mathrm{u}}_{\mu_{t}}$,
$t\ge0$. In addition, $\left(\mu_{t}\right)_{t\ge0}$ is norm continuous
if and only if $\left(R^{\mathrm{u}}_{\mu_{t}}\right)_{t\ge0}$ is.
If $\left(\mu_{t}\right)_{t\ge0}$ is symmetric, then $(\tilde{\lambda}(\mu_{t}))_{t\ge0}$
is a $C_{0}$-semigroup of contractive selfadjoint operators on $\Ltwo(\G)$,
and $\left(\mu_{t}\right)_{t\ge0}$ is norm continuous if and only
if $(\tilde{\lambda}(\mu_{t}))_{t\ge0}$ is. 
\end{thm}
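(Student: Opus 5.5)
This theorem is quoted verbatim from \cite{Skalski_Viselter__convolution_semigroups}, so no new argument is needed; still, here is how I would organize a proof. \textbf{First}, the correspondence. Given a $w^{*}$-continuous convolution semigroup $(\mu_{t})_{t\ge0}$, set $T^{\mathrm{u}}_{t}:=R^{\mathrm{u}}_{\mu_{t}}$. Each $R^{\mathrm{u}}_{\mu_{t}}=(\mu_{t}\tensor\i)\circ\Delta^{\mathrm{u}}$ is completely positive and contractive, since $\mu_{t}$ is positive and contractive. Co-associativity of $\Delta^{\mathrm{u}}$ gives $R^{\mathrm{u}}_{\mu}R^{\mathrm{u}}_{\nu}=R^{\mathrm{u}}_{\mu\star\nu}$, up to checking the order convention. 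This yields the semigroup law. It also gives commutation with every $L^{\mathrm{u}}_{\nu}$, and $R^{\mathrm{u}}_{\epsilon}=\i$.

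\textbf{Strong continuity} is the first real point. The maps are uniformly bounded, so it suffices to check $R^{\mathrm{u}}_{\mu_{t}}a\to a$ on a dense set. I would use the density of elements $(\i\tensor\omega)(\Delta^{\mathrm{u}}(b))$-type slices, via the cancellation property $\overline{\linspan}\,\Delta^{\mathrm{u}}(\CzU(\G))(\one\tensor\CzU(\G))=\CzU(\G)\tensormin\CzU(\G)$. I would combine this with the fact that a bounded net of functionals converging $w^{*}$ gives strict, hence norm-on-slices, convergence of $(\mu_{t}\tensor\i)\Delta^{\mathrm{u}}(a)$. Concretely, $(\mu_{t}\tensor\i)(\Delta^{\mathrm{u}}(a)(\one\tensor c))\to ac$ in norm, by approximating $\Delta^{\mathrm{u}}(a)(\one\tensor c)$ with finite sums of elementary tensors. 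Then one uses that $\CzU(\G)\CzU(\G)$ is dense.

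\textbf{Converse direction.} Let $(T_{t})$ be a $C_{0}$-semigroup of CP contractions commuting with all $L^{\mathrm{u}}_{\nu}$. The standard fact is that a normal-type map commuting with left convolutions is a right convolution operator. Concretely, define $\mu_{t}$ as the strict extension of $\epsilon\circ T_{t}$. Then use $\epsilon\circ L^{\mathrm{u}}_{\nu}=\nu$ to get $\nu\circ T_{t}=\epsilon\circ T_{t}\circ L^{\mathrm{u}}_{\nu}$, which after unwinding equals $\nu\circ R^{\mathrm{u}}_{\mu_{t}}$ for all $\nu$. Hence $T_{t}=R^{\mathrm{u}}_{\mu_{t}}$. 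Positivity, contractivity and the semigroup law transfer back, and $C_{0}$-continuity gives $\mu_{t}\to\epsilon$ $w^{*}$. Injectivity follows since $\mu=\epsilon\circ R^{\mathrm{u}}_{\mu}$ strictly.

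\textbf{Norm continuity.} This follows from $\Vert R^{\mathrm{u}}_{\mu_{t}}-\i\Vert=\Vert R^{\mathrm{u}}_{\mu_{t}-\epsilon}\Vert=\Vert\mu_{t}-\epsilon\Vert$.

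\textbf{Symmetric case.} $\tilde{\lambda}$ is a contractive homomorphism, so $(\tilde{\lambda}(\mu_{t}))$ is a contractive semigroup. Selfadjointness should come from symmetry: $\mu\circ\Rant^{\mathrm{u}}=\mu$, together with hermitianity of states. This is combined with the identities $\lambda(\mu)^{*}=\lambda(\overline{\mu}\circ S)$-type relations and the twist by $\hat{\tau}_{i/4}$, which exactly balances the scaling group so that $\tilde{\lambda}(\mu)^{*}=\tilde{\lambda}(\overline{\mu}\circ\Rant^{\mathrm{u}})$. Strong continuity on $\Ltwo(\G)$ I would get from $w^{*}$-continuity, using weak continuity on vectors plus the general fact that weakly continuous contraction semigroups are strongly continuous.

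\textbf{Main obstacle.} The last equivalence is the hard part: norm continuity of $\mu_{t}$ if $\tilde{\lambda}(\mu_{t})$ is norm continuous. A norm-continuous contractive semigroup of selfadjoint operators has a bounded generator $-A$ with $A\ge0$. From $A$ one must rebuild a bounded conditionally positive $\gamma$ with $\mu_{t}=\exp_{\star}(t\gamma)$. I would attack this via the generating functional and the associated Dirichlet form / $R^{(2,\varphi)}$ implementation of \cite[Lemma~2.14]{Skalski_Viselter__convolution_semigroups}. Boundedness of the $\Ltwo$ generator should force the generating functional to be bounded on the universal level, through a GNS-type (cocycle) representation whose norm is controlled by $\Vert A\Vert$.
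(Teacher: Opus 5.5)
The paper gives no proof of this statement; it imports it from \cite{Skalski_Viselter__convolution_semigroups}, so your sketch has to stand on its own. Its algebraic parts are correct. Coassociativity gives $R^{\mathrm{u}}_{\mu}R^{\mathrm{u}}_{\nu}=R^{\mathrm{u}}_{\nu\star\mu}$ (the order is harmless for a semigroup) and the commutation with the $L^{\mathrm{u}}_{\nu}$. The reconstruction $\mu_t:=\epsilon\circ T_t$ through $\epsilon\circ L^{\mathrm{u}}_{\nu}=\nu$ works, and no strict extension is needed. Finally, $\Vert R^{\mathrm{u}}_{\mu_t}-\i\Vert=\Vert\mu_t-\epsilon\Vert$ settles the first norm-continuity claim.

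The strong-continuity step, however, fails as written. Your slicing argument yields $R^{\mathrm{u}}_{\mu_t}(a)c\to ac$ and $cR^{\mathrm{u}}_{\mu_t}(a)\to ca$, which is only point-\emph{strict} convergence. Density of $\CzU(\G)\CzU(\G)$ cannot upgrade this to norm convergence, because $R^{\mathrm{u}}_{\mu_t}(ac)\neq R^{\mathrm{u}}_{\mu_t}(a)c$. Since the argument uses neither positivity nor the semigroup law, it would prove too much. On $\G=\R$ the bounded sequence $\mu_n:=\delta_0+\delta_n$ converges weak$^*$ to $\delta_0=\epsilon$, yet $\Vert R_{\mu_n}f-f\Vert_\infty=\Vert f\Vert_\infty$ for every $f\in\Cz(\R)$. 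The repair is easy. Bounded point-strict convergence implies point-weak convergence: write $\nu\in\CzU(\G)^*$ as $\nu'(\,\cdot\,c)$ by Cohen--Hewitt. A semigroup of contractions that is weakly continuous at $0^+$ is then automatically a $C_0$-semigroup; this is the same fact you invoke later on $\Ltwo(\G)$.

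The more serious gap is the last assertion: norm continuity of $(\tilde\lambda(\mu_t))_{t\ge0}$ forces norm continuity of $(\mu_t)_{t\ge0}$. The other direction is immediate from $\Vert\tilde\lambda(\mu_t-\epsilon)\Vert\le\Vert\mu_t-\epsilon\Vert$ for hermitian functionals. Your final paragraph only restates the goal. A bounded generator on $\Ltwo(\G)$ carries no a priori information about norms in $\CzU(\G)^*$, since $\tilde\lambda$ is far from isometric and involves the unbounded twist $\hat\tau_{i/4}$. The claim that ``boundedness of $A$ should force boundedness of the generating functional'' is exactly what has to be proved.

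What is missing is a quantitative mechanism converting operator-norm proximity of the Fourier transform of a state to $\one$ into norm proximity of the state to $\epsilon$. For the untwisted $\lambda$ one can get it by a Kazhdan-type circumcenter argument:
\begin{itemize}
\item Let $\mu$ be a state with GNS data $(\pi_\mu,\xi)$ and put $U:=(\pi_\mu\otimes\i)(\Ww)$.
\item Then $\Vert\lambda(\mu)-\one\Vert\le\delta$ gives $\Vert(\i\otimes\theta)(U)\xi-\xi\Vert^2\le2\delta$ for all normal states $\theta$ of $\Linfty(\hat\G)$.
\item These contractions form a convex semigroup, so the minimal-norm point $\eta$ of the closure of $\{(\i\otimes\theta)(U)\xi\}$ is $U$-invariant, i.e.\ $\pi_\mu(a)\eta=\epsilon(a)\eta$.
\item Since $\Vert\eta-\xi\Vert\le\sqrt{2\delta}$, it follows that $\Vert\mu-\epsilon\Vert=O(\sqrt{\delta})$.
\end{itemize}
For $\tilde\lambda$ one must additionally deal with the twist $\hat\tau_{i/4}$. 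That is precisely part of what the paper takes as a black box from \cite[Theorem~3.4]{Skalski_Viselter__convolution_semigroups}. As it stands, your proposal does not prove this part of the statement.
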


Remark that in the symmetric case we have, in the notation of \cite{Skalski_Viselter__convolution_semigroups},
$\tilde{\lambda}(\mu_{t})=\big(\widetilde{R}^{(2,\varphi)}_{\overline{\mu_{t}}}\big)^{*}=\widetilde{R}^{(2,\varphi)}_{\mu_{t}}$.
Also, in this case, more can be said about $(\tilde{\lambda}(\mu_{t}))_{t\ge0}$
and about the associated Dirichlet form. We leave out the full details
of \cite[Theorem~3.4, correspondence between (a), (c) and (d)]{Skalski_Viselter__convolution_semigroups}
to make our paper more easy to read.

\subsection{Amenability, the Haagerup property and property (T)}
\begin{defn}[\cite{Bedos_Tuset_2003}, see also \cite{Voiculescu__amen_Kac_alg,Enock_Schwartz__amenable_Kac_alg,Desmedt_Quaegebeur_Vaes}]
\label{def:amen_strong_amen}\mbox{}
\begin{itemize}
\item We say that $\G$ is\emph{ amenable} if there exists a net $\left(\omega_{i}\right)_{i\in\mathcal{I}}$
of states in $\Lone(\G)$ such that 
\begin{equation}
\left\Vert \omega\star\omega_{i}-\omega_{i}\right\Vert \xrightarrow[i\in\mathcal{I}]{}0\qquad(\forall\text{ state }\omega\in\Lone(\G)).\label{eq:amenability}
\end{equation}
\item We say that $\G$ is \emph{strongly amenable}\footnote{While the terminology ``$\G$ is strongly amenable'' is outdated
as nowadays most authors write ``$\hat{\G}$ is co-amenable'', we
retain both terms side-by-side for ease of exposition.}, or that $\hat{\G}$ is \emph{co-amenable}, if the Banach algebra
$\big(\Cz(\hat{\G})^{*},\star\big)$ is unital; this is equivalent
to the equality of the $C^{*}$-algebras $\CzU(\hat{\G})$ and $\Cz(\hat{\G})$,
i.e., to the canonical surjection $\CzU(\hat{\G})\to\Cz(\hat{\G})$
being injective.
\end{itemize}
\end{defn}

Each of the two notions of amenability has many equivalent characterizations,
most of which we will not spell out here. As the terminology suggests,
strong amenability implies amenability \cite[Theorem~3.2]{Bedos_Tuset_2003},
and it has long been open whether the converse is also true, as it
is for locally compact groups as well as for discrete quantum groups
\cite{Blanchard_Vaes,Tomatsu__amenable_discrete}.
\begin{rem}
\label{rem:amen_Lone_CzU_star}We make two standard observations stemming
from $\Lone(\G)$ being an ideal in the Banach algebra $(\CzU(\G)^{*},\star)$.
First, if a net $\left(\omega_{i}\right)_{i\in\mathcal{I}}$ of states
in $\CzU(\G)^{*}$ (rather than $\Lone(\G)$) satisfies (\ref{eq:amenability}),
then $\G$ is still amenable, because $\left(\omega_{i}\right)_{i\in\mathcal{I}}$
may be replaced by $\left(\omega_{i}\star\omega_{0}\right)_{i\in\mathcal{I}}$
for an arbitrary state $\omega_{0}\in\Lone(\G)$. Second, if a net
$\left(\omega_{i}\right)_{i\in\mathcal{I}}$ of states in $\Lone(\G)$
(or, more generally, in $\CzU(\G)^{*}$) satisfies (\ref{eq:amenability}),
then the same limiting condition actually holds for every state $\omega\in\CzU(\G)^{*}$,
because for a state $\omega_{0}\in\Lone(\G)$ we then have $\left\Vert \omega_{0}\star\omega_{i}-\omega_{i}\right\Vert \xrightarrow[i\in\mathcal{I}]{}0$,
thus $\left\Vert \omega\star\omega_{0}\star\omega_{i}-\omega\star\omega_{i}\right\Vert \xrightarrow[i\in\mathcal{I}]{}0$,
and also $\left\Vert \omega\star\omega_{0}\star\omega_{i}-\omega_{i}\right\Vert \xrightarrow[i\in\mathcal{I}]{}0$
as $\omega\star\omega_{0}\in\Lone(\G)$, which put together yield
that $\left\Vert \omega\star\omega_{i}-\omega_{i}\right\Vert \xrightarrow[i\in\mathcal{I}]{}0$.
\end{rem}

\begin{rem}
\label{rem:amen_symm}The net $\left(\omega_{i}\right)_{i\in\mathcal{I}}$
in the definition of amenability can be assumed to be symmetric. Indeed,
this is essentially noted in \cite[proof of Proposition~3]{Desmedt_Quaegebeur_Vaes}:
if a net $\left(\omega_{i}\right)_{i\in\mathcal{I}}$ of states in
$\Lone(\G)$ satisfies $\left\Vert \omega\star\omega_{i}-\omega_{i}\right\Vert \xrightarrow[i\in\mathcal{I}]{}0$
for every state $\omega\in\Lone(\G)$, then $\left(\omega_{i}':=\omega_{i}\star(\omega_{i}\circ\Rant)\right)_{i\in\mathcal{I}}$
is a net of symmetric states in $\Lone(\G)$ satisfying $\left\Vert \omega\star\omega_{i}'-\omega_{i}'\right\Vert \xrightarrow[i\in\mathcal{I}]{}0$
(as well as $\left\Vert \omega_{i}'\star\omega-\omega_{i}'\right\Vert \xrightarrow[i\in\mathcal{I}]{}0$)
for every state $\omega\in\Lone(\G)$.
\end{rem}

\begin{rem}
When $\G$ is second countable and amenable, the net $\left(\omega_{i}\right)_{i\in\mathcal{I}}$
in the definition of amenability may be chosen to be a sequence.
\end{rem}

\begin{defn}[{\cite[Definition~6.1 and the next paragraph]{Daws_Fima_Skalski_White_Haagerup_LCQG}}]
We say that a locally compact quantum group $\G$ has:
\begin{itemize}
\item \emph{property (T)} if each unitary representation of $\G$ that has
almost-invariant vectors has a non-zero invariant vector;
\item the \emph{Haagerup property} if $\G$ admits a mixing unitary representation
with almost-invariant vectors.
\end{itemize}
\end{defn}

We do not explain the notions appearing in this definition, and refer
to \cite{Daws_Fima_Skalski_White_Haagerup_LCQG} for further details.
By \cite[Theorem~6.1]{Daws_Skalski_Viselter__prop_T}, the dual $\hat{\G}$
has property (T) if and only if every net of states of $\CzU(\G)$
that converges to $\epsilon$ in the $w^{*}$-topology actually converges
in norm; and by \cite[Proposition~4.9]{Skalski_Viselter__convolution_semigroups},
$\hat{\G}$ has the Haagerup property if and only if there exists
a net $\left(\mu_{i}\right)_{i\in\mathcal{I}}$ of states of $\CzU(\G)$
such that $\left(\lambda(\mu_{i})\right)_{i\in\mathcal{I}}$, respectively
$(\tilde{\lambda}(\mu_{i}))_{i\in\mathcal{I}}$, is an approximate
identity for the $C^{*}$-algebra $\Cz(\hat{\G})$. One might as well
take these as the definitions of property (T) and the Haagerup property.
Together with Skalski we proved in \cite{Skalski_Viselter__convolution_semigroups}
far-reaching improvements of these characterizations involving convolution
semigroups. See more in the sequel.

\subsection{Perturbations of $C_{0}$-semigroups by bounded operators}

The following two results are well-known. We state them here in the
form we need for the convenience of the reader.
\begin{thm}[{\cite[Theorems~III.1.3, III.1.10 and Corollary~5.8]{Engel_Nagel__one_param_semigr_lin_evo_eq}}]
\label{thm:C0_bdd_perturb}Let $X$ be a Banach space, let $\left(T_{1}(t)\right)_{t\ge0}$
be a $C_{0}$-semigroup on $X$ with generator $A_{1}$, and let $A_{2}\in B(X)$.
Then $A_{1}+A_{2}$ generates a $C_{0}$-semigroup $\left(S(t)\right)_{t\ge0}$
on $X$. Furthermore:
\begin{itemize}
\item (The Dyson--Phillips theorem) Recursively define a family $\left(S_{n}(t)\right)_{t\ge0,n\in\Z_{+}}$
in $B(X)$ by $S_{0}(t):=T_{1}(t)$ and 
\begin{equation}
S_{n+1}(t):=\mathrm{SOT}\text{-}\int^{t}_{0}T_{1}(t-s)A_{2}S_{n}(s)\d s=\mathrm{SOT}\text{-}\int^{t}_{0}S_{n}(t-s)A_{2}T_{1}(s)\d s\label{eq:Dyson_Phillips}
\end{equation}
(the maps $S_{n}(\cdot)$ and the integrands are SOT-continuous).
Then for every $t\ge0$ we have $S(t)=\sum^{\infty}_{n=0}S_{n}(t)$,
where the series converges in the (operator) norm of $B(X)$. 
\item (The Trotter product formula) If the $C_{0}$-semigroups $\left(T_{1}(t)\right)_{t\ge0}$,
$\left(T_{2}(t):=e^{tA_{2}}\right)_{t\ge0}$ are contractive, then
\[
S(t)=\mathrm{SOT}\text{-}\lim_{n\to\infty}\left(T_{1}(\frac{t}{n})T_{2}(\frac{t}{n})\right)^{n}=\mathrm{SOT}\text{-}\lim_{n\to\infty}\left(T_{2}(\frac{t}{n})T_{1}(\frac{t}{n})\right)^{n}\qquad(\forall t\ge0).
\]
\end{itemize}
\end{thm}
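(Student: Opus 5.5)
The plan is to follow the standard route in \cite{Engel_Nagel__one_param_semigr_lin_evo_eq}: first construct the perturbed semigroup via the Dyson--Phillips series, then identify its generator, and finally derive the Trotter formula from Chernoff's product formula.

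\emph{Construction.} Fix $M\ge1$ and $\omega\in\R$ with $\Vert T_{1}(t)\Vert\le Me^{\omega t}$. Define $S_{n}(t)$ by the first integral in (\ref{eq:Dyson_Phillips}). The integrand $s\mapsto T_{1}(t-s)A_{2}S_{n}(s)x$ is continuous, since products of strongly continuous, locally bounded operator families are strongly continuous. By induction one gets
\[
\Vert S_{n}(t)\Vert\le Me^{\omega t}\frac{(M\Vert A_{2}\Vert t)^{n}}{n!}.
\]
Hence $S(t):=\sum_{n}S_{n}(t)$ converges in operator norm, uniformly for $t$ in compacts, and $\Vert S(t)\Vert\le Me^{(\omega+M\Vert A_{2}\Vert)t}$. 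Summing the recursion shows that $S$ solves the variation-of-parameters equation
\[
S(t)x=T_{1}(t)x+\int^{t}_{0}T_{1}(t-s)A_{2}S(s)x\d s .
\]
Strong continuity of $S$ follows from uniform convergence of the series on compacts.

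\emph{Semigroup law and generator.} I would verify the Cauchy-product identity $S_{n}(t+r)=\sum_{k=0}^{n}S_{k}(t)S_{n-k}(r)$ by induction on $n$, splitting $\int^{t+r}_{0}$ into $\int^{r}_{0}+\int^{t+r}_{r}$ and substituting; summing over $n$ gives $S(t+r)=S(t)S(r)$. The same induction, combined with Fubini, shows that the second integral expression in (\ref{eq:Dyson_Phillips}) agrees with the first.

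For the generator $B$ of $S$, the integral equation gives, for $x\in D(A_{1})$,
\[
\frac{1}{h}\left(S(h)x-x\right)=\frac{1}{h}\left(T_{1}(h)x-x\right)+\frac{1}{h}\int^{h}_{0}T_{1}(h-s)A_{2}S(s)x\d s\longrightarrow A_{1}x+A_{2}x ,
\]
so $A_{1}+A_{2}\subseteq B$. Conversely, for $\lambda$ large we have $\Vert A_{2}R(\lambda,A_{1})\Vert<1$. Then $\lambda-A_{1}-A_{2}=(I-A_{2}R(\lambda,A_{1}))(\lambda-A_{1})$ is a bijection from $D(A_{1})$ onto $X$, by a Neumann series. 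Since $\lambda-B$ is injective and extends $\lambda-A_{1}-A_{2}$, which is already surjective, the two coincide, so $B=A_{1}+A_{2}$.

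\emph{Trotter formula.} Put $F(t):=T_{1}(t)T_{2}(t)$ (or the reversed product). This is a contraction for every $t$, with $F(0)=I$, and $F$ is strongly continuous. For $x\in D(A_{1})$,
\[
\frac{1}{t}\left(F(t)x-x\right)=T_{1}(t)\frac{T_{2}(t)x-x}{t}+\frac{T_{1}(t)x-x}{t}\longrightarrow A_{2}x+A_{1}x .
\]
Moreover $D(A_{1})=D(A_{1}+A_{2})$ is trivially a core for the generator $A_{1}+A_{2}$. Chernoff's product formula then yields $F(t/n)^{n}x\to S(t)x$. Contractivity of the two semigroups is needed only here, to make $F$ contractive; in our setting the perturbed semigroup $S$ is then also contractive.

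The main obstacle is this last step, i.e.\ proving Chernoff's formula itself. I would do it via the estimate, valid for contractions $C$,
\[
\Vert C^{n}x-e^{n(C-I)}x\Vert\le\sqrt{n}\,\Vert Cx-x\Vert,
\]
applied with $C=F(t/n)$. The generators $\frac{n}{t}(F(t/n)-I)$ converge strongly to $A_{1}+A_{2}$ on the core. The Trotter--Kato approximation theorem then gives $e^{t\cdot\frac{n}{t}(F(t/n)-I)}x\to S(t)x$, and the $\sqrt{n}$ estimate closes the gap between $F(t/n)^{n}x$ and this exponential.
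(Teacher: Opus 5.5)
Your proposal is correct, but it is organized quite differently from the paper. The paper does not reprove this theorem. It cites Engel--Nagel for generation, the Dyson--Phillips series and the Trotter formula, and it proves only the second equality in (\ref{eq:Dyson_Phillips}). That proof is a short induction on $n$:
assume $S_{n+1}(s)=\int_0^s S_n(s-u)A_2T_1(u)\d u$ for all $s\ge0$;
insert this into the defining integral $S_{n+2}(t)=\int_0^t T_1(t-s)A_2S_{n+1}(s)\d s$;
change variables $x:=s-u$, $y:=u$ on the simplex $\{0\le u\le s\le t\}$;
recognize the inner integral $\int_0^{t-y}T_1(t-y-x)A_2S_n(x)\d x$ as $S_{n+1}(t-y)$ by definition.

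You instead rebuild the textbook theory. You construct $S$ directly as the norm-convergent series, prove the semigroup law through the Cauchy-product identity $S_n(t+r)=\sum_{k=0}^n S_k(t)S_{n-k}(r)$, and identify the generator through $\lambda-A_1-A_2=(I-A_2R(\lambda,A_1))(\lambda-A_1)$ plus injectivity of $\lambda-B$. This avoids the Hille--Yosida theorem and yields the bound $\Vert S(t)\Vert\le Me^{(\omega+M\Vert A_2\Vert)t}$ and the variation-of-parameters formula along the way. Your Trotter argument is exactly the mechanism behind the cited corollary: Trotter--Kato applied, for fixed $t$, to the bounded generators $\frac{n}{t}(F(\frac{t}{n})-I)$, plus Chernoff's $\sqrt{n}$ lemma to close the gap on $D(A_1)$ and then on $X$ by density and contractivity.

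What your route buys is self-containedness. Its weak spot is that it is thinnest exactly where the paper does its only work. The phrase ``the same induction, combined with Fubini'' does not describe the Cauchy-product induction you actually ran: splitting $\int_0^{t+r}$ does not by itself give the reversed formula. It should be replaced by the induction described above. Likewise, strong continuity of $t\mapsto S_n(t)$ itself, and not just of the integrand in $s$, needs its own small induction. One way is via joint continuity of $(t,s)\mapsto T_1(t-s)A_2S_n(s)x$ on $\{0\le s\le t\}$.
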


The first equality in (\ref{eq:Dyson_Phillips}) is part of the standard
statement of the theorem. The second equality is apparently well-known
to $C_{0}$-semigroup experts, but is not written explicitly in any
text we know, although \cite[Exercise~(3)~(ii) in Section~III.1]{Engel_Nagel__one_param_semigr_lin_evo_eq}
comes close. We provide a direct proof for completeness. Suppose that
(\ref{eq:Dyson_Phillips}) holds true for some $n\in\Z_{+}$ and all
$t\ge0$. Then, for all $t\ge0$, interpreting all integrals in the
SOT, we obtain 
\[
S_{n+2}(t)=\int^{t}_{0}T_{1}(t-s)A_{2}S_{n+1}(s)\d s=\int^{t}_{0}\left(\int^{s}_{0}T_{1}(t-s)A_{2}S_{n}(s-u)A_{2}T_{1}(u)\d u\right)\dd s.
\]
Applying the change of variable $x:=s-u$, $y:=u$ (justified, e.g.,
by viewing the integrals in the WOT) gives 
\[
S_{n+2}(t)=\int^{t}_{0}\left(\int^{t-y}_{0}T_{1}(t-y-x)A_{2}S_{n}(x)A_{2}T_{1}(y)\d x\right)\dd y=\int^{t}_{0}S_{n+1}(t-y)A_{2}T_{1}(y)\d y,
\]
as desired.

\section{Approximation properties via convolution semigroups}

The starting point of this work is the following result of Kalantar,
Neufang and Ruan, generalizing to locally compact quantum groups a
classical and important result proved independently by Rosenblatt
for general locally compact groups \cite[Theorem~1.10]{Rosenblatt__ergod_mix_random_walk_lc_grp}
and by Kaimanovich and Vershik for discrete groups \cite[Theorem~4.3]{Kaimanovich_Vershik__random_walks_discrete_groups_bdry_entropy},
and later extended to discrete Kac algebras by Vaes \cite[Lemma~7.1]{Vaes_strict_out_act}.
\begin{thm}[{\cite[Theorem~4.1]{Kalantar_Neufang_Ruan__Poisson_bdry_LCQG}}]
\label{thm:KNR__amen_by_single_state}A second countable locally
compact quantum group $\G$ is amenable if and only if there exists
a state $\mu\in\CzU(\G)^{*}$, which may even be taken from $\Lone(\G)$,
such that $\left\Vert \omega\star\mu^{\star n}-\mu^{\star n}\right\Vert \xrightarrow[n\to\infty]{}0$
for every state $\omega\in\Lone(\G)$.
\end{thm}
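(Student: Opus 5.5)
The ``if'' direction is immediate from Remark~\ref{rem:amen_Lone_CzU_star}. If $\mu\in\CzU(\G)^{*}$ is a state with $\Vert\omega\star\mu^{\star n}-\mu^{\star n}\Vert\to0$ for every state $\omega\in\Lone(\G)$, then the sequence $(\mu^{\star n})_{n}$ is a net of states of $\CzU(\G)$ satisfying (\ref{eq:amenability}). The first observation of that remark then says that $\G$ is amenable. The whole difficulty is the ``only if'' direction, for which I would adapt the Kaimanovich--Vershik/Rosenblatt construction of a single ``spread-out'' state as an infinite convex combination of approximately invariant states.

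\emph{Setup.} Assume $\G$ is second countable and amenable. Fix a norm-dense sequence $(\omega_{k})_{k\ge1}$ in the set of states of $\Lone(\G)$, which exists since $\Lone(\G)$ is separable. It then suffices to prove $\Vert\omega_{k}\star\mu^{\star n}-\mu^{\star n}\Vert\to0$ for each $k$, because $\omega\mapsto\omega\star\mu^{\star n}-\mu^{\star n}$ is $2$-Lipschitz uniformly in $n$. Amenability provides, for every finite set $F$ of states in $\Lone(\G)$ and every $\e>0$, a state $\nu\in\Lone(\G)$ with $\Vert\rho\star\nu-\nu\Vert<\e$ for all $\rho\in F$.

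\emph{Construction.} I will choose states $\nu_{1},\nu_{2},\dots$ in $\Lone(\G)$ and weights $p_{k}>0$ with $\sum_{k}p_{k}=1$, and set $\mu:=\sum_{k}p_{k}\nu_{k}\in\Lone(\G)$. The states are chosen inductively: $\nu_{k}$ is $2^{-k}$-invariant with respect to the finite set
\[
F_{k}:=\{\omega_{i}\star\nu_{j_{1}}\star\cdots\star\nu_{j_{m}}:\ i\le k,\ m\le k,\ j_{1},\dots,j_{m}<k\}\cup\{\omega_{i}:i\le k\}.
\]
Expanding $\mu^{\star n}$ multinomially writes it as an average, over random words $(j_{1},\dots,j_{n})$ with i.i.d.\ letters of law $(p_{k})$, of the products $\nu_{j_{1}}\star\cdots\star\nu_{j_{n}}$. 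Call a word \emph{good at level $K$} if some position $r$ has $j_{r}\ge\max(K,r)$ and all earlier letters satisfy $j_{s}<j_{r}$. For such a word, split it at the first such position $r$. For $i\le K$, the element $\omega_{i}\star\nu_{j_{1}}\star\cdots\star\nu_{j_{r-1}}$ lies in $F_{j_{r}}$. Consequently
\[
\Vert\omega_{i}\star(\text{word})-(\text{word})\Vert\le\Vert\omega_{i}\star\nu_{j_{1}}\star\cdots\star\nu_{j_{r}}-\nu_{j_{1}}\star\cdots\star\nu_{j_{r}}\Vert+\Vert\nu_{j_{1}}\star\cdots\star\nu_{j_{r-1}}\star\nu_{j_{r}}-\nu_{j_{1}}\star\cdots\star\nu_{j_{r-1}}\star\nu_{j_{r}}\Vert.
\]
The second term vanishes trivially. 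For the first, I will apply the invariance of $\nu_{j_r}$ both to $\omega_{i}\star(\text{prefix})$ and to the prefix itself: both lie in $F_{j_r}$, the prefix being a limit case of the definition that I will add to $F_k$. Then right multiplication by the suffix, which is contractive, gives a bound of $2\cdot2^{-K}$. Hence
\[
\Vert\omega_{i}\star\mu^{\star n}-\mu^{\star n}\Vert\le2^{1-K}+2\,\mathbb{P}(\text{a length-}n\text{ word is not good at level }K).
\]

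\emph{Main obstacle.} The remaining step is probabilistic: choosing the weights $(p_{k})$ with a heavy enough tail that, for each fixed $K$, the probability that a random word of length $n$ is good at level $K$ tends to $1$ as $n\to\infty$. The danger is that the position $r$ outruns the index $j_{r}$, so that the prefix is too long to lie in $F_{j_{r}}$. I would try $p_{k}\asymp1/(k\log^{2}k)$, for which $\mathbb{P}(j\ge m)\asymp1/\log m$. Then $\mathbb{P}(j_{r}\ge r)\asymp1/\log r$, and $\sum_{r}1/\log r=\infty$. A Borel--Cantelli-type argument using independence should show that almost surely some position $r$ has $j_{r}\ge\max(K,r)$. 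Taking the first record (a letter strictly exceeding all previous ones) among such positions handles the condition on earlier letters. Should the record requirement cause trouble, I would instead simply enlarge $F_{k}$ to include prefixes with letters up to $k$, so that records are not needed. Once this estimate holds, letting $n\to\infty$ and then $K\to\infty$ completes the proof.
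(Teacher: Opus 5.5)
Your proposal is correct, and it gives a self-contained proof where the paper gives none. The paper simply imports the statement from Kalantar--Neufang--Ruan, whose result generalizes the Kaimanovich--Vershik and Rosenblatt theorems. Your argument is a Kaimanovich--Vershik-type construction written out directly for $\Lone(\G)$.

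The ``if'' direction via Remark~\ref{rem:amen_Lone_CzU_star} is exactly right. In the ``only if'' direction, your specific device is to cap the prefix length by the index itself ($m\le k$ in $F_k$) and to require all earlier letters to be smaller. This keeps each $F_k$ finite and lets you fix the weights $(p_k)$ before any $\nu_k$ is chosen; the only price is that $(p_k)$ must have infinite mean. A common alternative is to take arbitrary weights and instead choose increasing length cutoffs $n_k$, so that a letter of index $>k$ appears before time $n_k$ with high probability; that costs an extra inductive parameter. Your version also has a bonus: choosing each $\nu_k$ symmetric (Remark~\ref{rem:amen_symm}) makes $\mu$ symmetric, which makes Remark~\ref{rem:amen_by_single_state_symm} explicit.

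Some tidying is needed. The displayed inequality in the good-word step is vacuous, since its second term is literally $0$. The estimate you actually use, with $P$ the prefix, is
\[
\Vert\omega_{i}\star P\star\nu_{j_{r}}-P\star\nu_{j_{r}}\Vert\le\Vert\omega_{i}\star P\star\nu_{j_{r}}-\nu_{j_{r}}\Vert+\Vert\nu_{j_{r}}-P\star\nu_{j_{r}}\Vert<2^{1-j_{r}}\le2^{1-K},
\]
followed by right convolution with the suffix.

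The probabilistic step is easier than you fear. Suppose some $r\le n$ has $j_{r}\ge\max(K,r)$, and let $r'$ be the first position where $\max_{s\le r}j_{s}$ is attained. Then $j_{r'}\ge j_{r}\ge\max(K,r)\ge\max(K,r')$, and all earlier letters are strictly smaller, so the record condition is automatic. By independence, the probability that no such $r\le n$ exists is at most $\prod_{r=K}^{n}\bigl(1-\sum_{k\ge r}p_{k}\bigr)\le\exp\bigl(-\sum_{r=K}^{n}\sum_{k\ge r}p_{k}\bigr)$. This tends to $0$ because $\sum_{r\ge1}\sum_{k\ge r}p_{k}=\sum_{k}kp_{k}=\infty$. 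So any weights with infinite mean work (e.g.\ $p_{k}\propto k^{-2}$), and no Borel--Cantelli argument is needed.

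Finally, discard your fallback of enlarging $F_{k}$ to prefixes with letters up to $k$. Then $F_k$ would involve $\nu_k$ itself, which is circular. The strict record condition is genuinely needed, but as just shown it comes for free.
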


\begin{rem}
\label{rem:amen_by_single_state_symm}By Remark~\ref{rem:amen_symm},
the state $\mu$ in the statement of Theorem~\ref{thm:KNR__amen_by_single_state}
may be chosen symmetric, as a simple inspection of the proof of that
result shows.
\end{rem}

Theorem~\ref{thm:KNR__amen_by_single_state} was used by Rosenblatt
and by Kaimanovich and Vershik to verify a conjecture of Furstenberg
saying that every amenable second countable locally compact group
admits a probability measure with the Liouville property, namely one
that possesses no harmonic $\Linfty$-functions except the constant
ones. This was extended to locally compact quantum groups in \cite[Theorem~4.2, see also Remark~4.3]{Kalantar_Neufang_Ruan__Poisson_bdry_LCQG}.

We are interested in strengthening Theorem~\ref{thm:KNR__amen_by_single_state}
by replacing the discrete-time convolution semigroup $\left(\mu^{\star n}\right)^{\infty}_{n=0}$
by a continuous-time one $\left(\mu_{t}\right)_{t\ge0}$.
\begin{thm}
\label{thm:amen_chars}Let $\G$ be a second countable locally compact
quantum group. Then the following conditions are equivalent:
\begin{enumerate}
\item \label{enu:amen_chars__1}$\G$ is amenable, i.e., there exists a
sequence $\left(\omega_{n}\right)^{\infty}_{n=1}$ of states in $\Lone(\G)$
such that
\[
\left\Vert \omega\star\omega_{n}-\omega_{n}\right\Vert \xrightarrow[n\to\infty]{}0\qquad(\forall\text{ state }\omega\in\Lone(\G));
\]
\item \label{enu:amen_chars__2}there exists a $w^{*}$-continuous convolution
semigroup of states $\left(\mu_{t}\right)_{t\ge0}$ on $\G$ such
that 
\begin{equation}
\left\Vert \omega\star\mu_{t}-\mu_{t}\right\Vert \xrightarrow[t\to\infty]{}0\qquad(\forall\text{ state }\omega\in\Lone(\G));\label{eq:amen_chars__2}
\end{equation}
\item \label{enu:amen_chars__3}there exists a \emph{symmetric, norm-continuous}
convolution semigroup of states $\left(\mu_{t}\right)_{t\ge0}$ on
$\G$ satisfying (\ref{eq:amen_chars__2}).
\end{enumerate}
\end{thm}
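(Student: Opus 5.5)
The plan is to prove (3)$\Rightarrow$(2)$\Rightarrow$(1)$\Rightarrow$(3). The implication (3)$\Rightarrow$(2) is trivial, since norm continuity implies $w^{*}$-continuity. For (2)$\Rightarrow$(1), take $\omega_{n}:=\mu_{n}$. These are states of $\CzU(\G)$ rather than elements of $\Lone(\G)$, and they satisfy (\ref{eq:amenability}) as $n\to\infty$. By the first observation in Remark~\ref{rem:amen_Lone_CzU_star}, $\G$ is amenable; explicitly, one replaces $\mu_{n}$ by $\mu_{n}\star\omega_{0}$ for a fixed state $\omega_{0}\in\Lone(\G)$.

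The substance is (1)$\Rightarrow$(3). By Theorem~\ref{thm:KNR__amen_by_single_state} and Remark~\ref{rem:amen_by_single_state_symm}, there is a symmetric state $\mu\in\Lone(\G)$ with $\Vert\omega\star\mu^{\star n}-\mu^{\star n}\Vert\to0$ for every state $\omega\in\Lone(\G)$. Define the compound Poisson semigroup
\[
\mu_{t}:=\exp_{\star}(t(\mu-\epsilon))=e^{-t}\sum_{n=0}^{\infty}\frac{t^{n}}{n!}\mu^{\star n}.
\]
This formula is valid because $\epsilon$ is the unit and commutes with $\mu$. As recalled in the preliminaries, $(\mu_{t})_{t\ge0}$ is a norm-continuous convolution semigroup of states. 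It is symmetric because $\Rant^{\mathrm{u}}$ is an anti-automorphism intertwining $\Delta^{\mathrm{u}}$ with the flipped $\Delta^{\mathrm{u}}$, so $\nu\mapsto\nu\circ\Rant^{\mathrm{u}}$ is an anti-homomorphism of $(\CzU(\G)^{*},\star)$. Consequently $\mu^{\star n}\circ\Rant^{\mathrm{u}}=(\mu\circ\Rant^{\mathrm{u}})^{\star n}=\mu^{\star n}$, and $\epsilon$ is likewise invariant.

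It remains to check (\ref{eq:amen_chars__2}). Fix a state $\omega\in\Lone(\G)$ and set $a_{n}:=\Vert\omega\star\mu^{\star n}-\mu^{\star n}\Vert$. Then $a_{n}\le2$ and $a_{n}\to0$, and
\[
\Vert\omega\star\mu_{t}-\mu_{t}\Vert\le e^{-t}\sum_{n}\frac{t^{n}}{n!}a_{n}.
\]
The right-hand side is the expectation of $a_{N_{t}}$, where $N_{t}$ is Poisson distributed with mean $t$. Given $\e>0$, choose $n_{0}$ with $a_{n}<\e$ for all $n\ge n_{0}$. The bound is then at most $\e+2e^{-t}\sum_{n<n_{0}}t^{n}/n!$, and the second term tends to $0$ as $t\to\infty$.

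The main difficulty is already absorbed by Theorem~\ref{thm:KNR__amen_by_single_state}, namely passing from an arbitrary Følner-type sequence to powers of a single state. The only point needing care in this proof is the Poisson averaging step, which is elementary.
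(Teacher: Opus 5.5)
Your proposal is correct and follows the paper's proof essentially verbatim. The route is the same: the trivial implications, then the symmetric Kalantar--Neufang--Ruan state $\mu\in\Lone(\G)$, the compound Poisson semigroup $\mu_{t}=e^{-t}\sum_{n}\frac{t^{n}}{n!}\mu^{\star n}$, and the identical Poisson-averaging estimate. You also make explicit two points the paper leaves implicit: symmetry via the anti-homomorphism $\nu\mapsto\nu\circ\Rant^{\mathrm{u}}$, and (2)$\Rightarrow$(1) via Remark~\ref{rem:amen_Lone_CzU_star}.
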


\begin{proof}
The implications \ref{enu:amen_chars__3}$\implies$\ref{enu:amen_chars__2}$\implies$\ref{enu:amen_chars__1}
are trivial. 

\ref{enu:amen_chars__1}$\implies$\ref{enu:amen_chars__3}. Since
$\G$ is second countable we may apply Theorem~\ref{thm:KNR__amen_by_single_state}
and Remark~\ref{rem:amen_by_single_state_symm} to obtain a symmetric
state $\mu\in\Lone(\G)$ such that $\left\Vert \omega\star\mu^{\star n}-\mu^{\star n}\right\Vert \xrightarrow[n\to\infty]{}0$
for every state $\omega\in\Lone(\G)$. We will prove that the symmetric,
norm-continuous convolution semigroup of states $\left(\mu_{t}\right)_{t\ge0}$
generated by the symmetric, normalized, hermitian, conditionally positive
functional $\mu-\epsilon\in\CzU(\G)^{*}$, namely 
\[
\mu_{t}:=\exp_{\star}\left(t\left(\mu-\epsilon\right)\right)=e^{-t}\exp_{\star}\left(t\mu\right)=e^{-t}\sum^{\infty}_{n=0}\frac{t^{n}}{n!}\mu^{\star n}\in\CzU(\G)^{*}\qquad(t\ge0),
\]
satisfies (\ref{eq:amen_chars__2}). Indeed, fix a state $\omega\in\Lone(\G)$
and $\e>0$. Choose $n_{0}\in\N$ such that $\left\Vert \omega\star\mu^{\star n}-\mu^{\star n}\right\Vert \le\e$
for all $n>n_{0}$. For every $t\ge0$ we now have
\[
\omega\star\mu_{t}-\mu_{t}=e^{-t}\sum^{\infty}_{n=0}\frac{t^{n}}{n!}\left(\omega\star\mu^{\star n}-\mu^{\star n}\right).
\]
Thus, 
\[
\begin{split}\left\Vert \omega\star\mu_{t}-\mu_{t}\right\Vert  & \le e^{-t}\sum^{\infty}_{n=0}\frac{t^{n}}{n!}\left\Vert \omega\star\mu^{\star n}-\mu^{\star n}\right\Vert \\
 & \le e^{-t}\left(\sum^{n_{0}}_{n=0}2\frac{t^{n}}{n!}+\sum^{\infty}_{n=n_{0}+1}\e\frac{t^{n}}{n!}\right)\le e^{-t}\sum^{n_{0}}_{n=0}2\frac{t^{n}}{n!}+\e\xrightarrow[t\to\infty]{}\e,
\end{split}
\]
where in the second inequality we used the fact that $\left\Vert \omega\right\Vert =1$.
This proves the assertion.
\end{proof}

\begin{rem}
In the proof of \ref{enu:amen_chars__1}$\implies$\ref{enu:amen_chars__3}
we have not used the fact that $\mu\in\Lone(\G)$; each $\mu\in\CzU(\G)^{*}$
with the same properties would do. However, when $\mu$ is chosen
from $\Lone(\G)$, all states in the resulting convolution semigroup
$\left(\mu_{t}\right)_{t\ge0}$ do \emph{not} belong to $\Lone(\G)$
unless $\epsilon\in\Lone(\G)$, i.e., unless $\G$ is discrete \cite[Proposition~4.1]{Runde__charac_compact_discr_QG},
since for every $t\ge0$ we have $\mu_{t}\in e^{-t}(\epsilon+\Lone(\G))\subseteq\CzU(\G)^{*}$.
This is not surprising: having $\mu_{t}\in\Lone(\G)$ for each $t>0$
when the convolution semigroup $\left(\mu_{t}\right)_{t\ge0}$ is
norm continuous entails that the closed ideal $\Lone(\G)$ of the
Banach algebra $(\CzU(\G)^{*},\star)$ contains the (co-) unit $\epsilon$,
equivalently that $\G$ is discrete. In the general setting of a $w^{*}$-continuous
(but not necessarily norm-continuous) convolution semigroup of states
$\left(\mu_{t}\right)_{t\ge0}$, having $\mu_{t}\in\Lone(\G)$ for
each $t>0$ implies that $\epsilon\in\Cz(\G)^{*}$, thus $\G$ is
co-amenable. See Open Question~\ref{ques:strong_amen_in_terms_of_hat_G}
in the end of the paper.
\end{rem}

The fact that the convolution semigroup constructed in the proof of
Theorem~\ref{thm:amen_chars} is norm continuous calls for a further
improvement, namely:
\begin{question}
\label{ques:amen}Under which conditions does amenability of $\G$
imply the existence of a $w^{*}$-continuous, but \emph{not norm-continuous},
convolution semigroup of states $\left(\mu_{t}\right)_{t\ge0}$ on
$\G$ satisfying (\ref{eq:amen_chars__2}) of Theorem~\ref{thm:amen_chars}?
\end{question}

The obvious obstruction is $\hat{\G}$ having property (T), which
means that when a net of states of $\CzU(\G)$ converges to $\epsilon$
in the $w^{*}$-topology, it actually converges in norm \cite[Theorem~6.1]{Daws_Skalski_Viselter__prop_T}.

Question~\ref{ques:amen} seems to be fairly difficult because operator-theoretic
techniques are not readily applicable, and we were not able to solve
this question aside from the trivial, namely the co-commutative, case,
which we now explain. So suppose that $\G$ equals $\hat{G}$ for
a second countable locally compact group $G$ that does not have property
(T). Since $G$ does not have property (T), there exists a $w^{*}$-continuous,
but not norm-continuous, convolution semigroup of states $\left(\mu^{1}_{t}\right)_{t\ge0}$
on $\G=\hat{G}$ by \cite[Theorem~3]{Akemann_Walter__unb_neg_def_func}
(or by the far more general \cite[Theorem~4.6]{Skalski_Viselter__convolution_semigroups}).
Since $\G=\hat{G}$ is (trivially) amenable, there exists a norm-continuous
convolution semigroup of states $\left(\mu^{2}_{t}\right)_{t\ge0}$
on $\G$ such that $\left\Vert \omega\star\mu^{2}_{t}-\mu^{2}_{t}\right\Vert \xrightarrow[t\to\infty]{}0$
for each state $\omega\in\Lone(\G)$ by Theorem~\ref{thm:amen_chars}.
By co-commutativity of $\G$, the family $\left(\mu_{t}:=\mu^{1}_{t}\star\mu^{2}_{t}\right)_{t\ge0}$
is a $w^{*}$-continuous convolution semigroup of states on $\G$
that is not norm continuous and that satisfies (\ref{eq:amen_chars__2})
of Theorem~\ref{thm:amen_chars}.

To give concrete examples coming from groups for convolution semigroups
with the properties listed in Question~\ref{ques:amen}, the following
observation, which must be folklore and should be compared with the
subtle relationship between (say, left) invariant vs.~topological
invariant means on locally compact groups \cite[Theorem~2.2.1 and its proof]{Greenleaf__book},
is useful.
\begin{rem}
\label{rem:top_vs_transl_asymp_left_inv}Let $G$ be a locally compact
group. For $\omega\in\Lone(G)$ and $g\in G$ write $L_{g}\omega:=\delta_{g}\star\omega$
for the left $g$-translate of $\omega$ given by $(L_{g}\omega)(h):=\omega(g^{-1}h)$,
$h\in G$. Then a sequence $\left(\omega_{n}\right)^{\infty}_{n=1}$
of probability measures in $\Lone(G)$ satisfies (\ref{eq:amenability}),
that is, $\left\Vert \omega\star\omega_{n}-\omega_{n}\right\Vert \xrightarrow[n\to\infty]{}0$
for each probability measure $\omega\in\Lone(\G)$, if and only if
for every $g\in G$ we have $\left\Vert L_{g}\omega_{n}-\omega_{n}\right\Vert \xrightarrow[n\to\infty]{}0$.
Indeed, the forward direction ($\implies$) follows from Remark~\ref{rem:amen_Lone_CzU_star}.
For the converse implication ($\impliedby$), let $\omega\in\Lone(G)$
be a probability measure. Then by \cite[Appendix~4]{Folland__course_in_abs_harm_anal_book_2nd_ed},
for each $\rho\in\Lone(G)$ we have $\omega\star\rho=\int_{G}\omega(g)L_{g}\rho\d g$,
where the $\Lone(G)$-valued integral is in the weak sense. Therefore,
for every $n\in\N$ we get $\omega\star\omega_{n}-\omega_{n}=\int_{G}\omega(g)\left(L_{g}\omega_{n}-\omega_{n}\right)\dd g$,
thus $\left\Vert \omega\star\omega_{n}-\omega_{n}\right\Vert \le\int_{G}\left|\omega(g)\right|\left\Vert L_{g}\omega_{n}-\omega_{n}\right\Vert \dd g$.
Noting that $\left|\omega(g)\right|\left\Vert L_{g}\omega_{n}-\omega_{n}\right\Vert \le2\left|\omega(g)\right|$
for each $n\in\N$ and $g\in G$, we can apply Lebesgue's dominated
convergence theorem to deduce that $\int_{G}\left|\omega(g)\right|\left\Vert L_{g}\omega_{n}-\omega_{n}\right\Vert \dd g\xrightarrow[n\to\infty]{}0$,
hence that $\left\Vert \omega\star\omega_{n}-\omega_{n}\right\Vert \xrightarrow[n\to\infty]{}0$,
as claimed. Remark that the above argument works neither for nets
instead of sequences nor for sequences in $\Cz(G)^{*}$ instead of
in $\Lone(G)$.
\end{rem}

\begin{example}
Let $\G:=\R$. The Brownian semigroup given by $\frac{\dd\mu^{\mathrm{B}}_{t}}{\dd\text{(Lebesgue)}}=f^{\mathrm{B}}_{t}$
with $f^{\mathrm{B}}_{t}(x)=\frac{1}{\sqrt{4\pi t}}e^{-x^{2}/(4t)}$
(equivalently, $\widehat{\mu_{t}}=e^{-t\left(\cdot\right)^{2}}$)
and the Cauchy semigroup given by $\frac{\dd\mu^{\mathrm{C}}_{t}}{\dd\text{(Lebesgue)}}=f^{\mathrm{C}}_{t}$
with $f^{\mathrm{C}}_{t}(x)=\frac{t}{\pi\left(t^{2}+x^{2}\right)}$
(equivalently, $\widehat{\mu_{t}}=e^{-t\left|\cdot\right|}$), which
are symmetric and not norm continuous, satisfy (\ref{eq:amen_chars__2})
of Theorem~\ref{thm:amen_chars}. We prove that using Remark~\ref{rem:top_vs_transl_asymp_left_inv}.
Fix $i\in\left\{ \mathrm{B},\mathrm{C}\right\} $ and $0<x_{0}\in\R$
(negative values of $x_{0}$ are treated similarly). Then for $t>0$,
the function $L_{x_{0}}f^{i}_{t}-f^{i}_{t}$ vanishes only at $\frac{x_{0}}{2}$.
Consequently, using the fact that $\mu^{i}_{t}$ is a probability
measure, we deduce that 
\[
\begin{split}\left\Vert L_{x_{0}}f^{i}_{t}-f^{i}_{t}\right\Vert  & =\int_{\R}\left|f^{i}_{t}(x-x_{0})-f^{i}_{t}(x)\right|\dd x\\
 & =\int^{x_{0}/2}_{-\infty}\left(f^{i}_{t}(x)-f^{i}_{t}(x-x_{0})\right)\dd x+\int^{\infty}_{x_{0}/2}\left(f^{i}_{t}(x-x_{0})-f^{i}_{t}(x)\right)\dd x\\
 & =2\int^{x_{0}/2}_{-\infty}f^{i}_{t}(x)\d x+2\int^{\infty}_{x_{0}/2}f^{i}_{t}(x-x_{0})\d x-2.
\end{split}
\]
By symmetry we have $\int^{\infty}_{x_{0}/2}f^{i}_{t}(x-x_{0})\d x=\int^{x_{0}/2}_{-\infty}f^{i}_{t}(x)\d x$.
To conclude, for all $t>0$ we have 
\[
\left\Vert L_{x_{0}}f^{i}_{t}-f^{i}_{t}\right\Vert =4\int^{x_{0}/2}_{-\infty}f^{i}_{t}(x)\d x-2.
\]
This indeed goes to zero as $t\to\infty$:
\begin{itemize}
\item When $i=\mathrm{B}$ we have $\int^{x_{0}/2}_{-\infty}f^{\mathrm{B}}_{t}(x)\d x=\frac{1}{\sqrt{\pi}}\int^{x_{0}/(4\sqrt{t})}_{-\infty}e^{-y^{2}}\d y\xrightarrow[t\to\infty]{}\frac{1}{\sqrt{\pi}}\int^{0}_{-\infty}e^{-y^{2}}\d y=\frac{1}{2}$.
\item When $i=\mathrm{C}$ we have $\int^{x_{0}/2}_{-\infty}f^{\mathrm{C}}_{t}(x)\d x=\frac{1}{\pi}\int^{x_{0}/(2t)}_{-\infty}\frac{1}{1+y^{2}}\d y\xrightarrow[t\to\infty]{}\frac{1}{\pi}\int^{0}_{-\infty}\frac{1}{1+y^{2}}\d y=\frac{1}{2}$.
\end{itemize}
\end{example}

Although we could not solve the amenability-related Question~\ref{ques:amen},
we were able to solve its analog for strong amenability (i.e.~co-amenability
of the dual quantum group), to which operator-theoretic methods apply
more naturally. We first present the following characterization of
strong amenability, which is analogous to (and partly follows from)
Theorem~\ref{thm:amen_chars}.
\begin{thm}
\label{thm:strong_amen_chars}Let $\G$ be a second countable locally
compact quantum group. Then the following conditions are equivalent:
\begin{enumerate}
\item \label{enu:strong_amen_chars__1}$\G$ is strongly amenable (that
is, $\hat{\G}$ is co-amenable);
\item \label{enu:strong_amen_chars__2}there exists a net $\left(X_{i}\right)_{i\in\mathcal{I}}$
of operators of norm $1$ in $\M(\Cz(\hat{\G}))$ such that
\[
\left\Vert \lambda(\omega)X_{i}-X_{i}\right\Vert \xrightarrow[i\in\mathcal{I}]{}0\qquad(\forall\text{ state }\omega\in\Lone(\G));
\]
\item \label{enu:strong_amen_chars__3}there exists a $w^{*}$-continuous
convolution semigroup of states $\left(\mu_{t}\right)_{t\ge0}$ on
$\G$ such that
\begin{equation}
\left\Vert \lambda(\omega\star\mu_{t}-\mu_{t})\right\Vert \xrightarrow[t\to\infty]{}0\qquad(\forall\text{ state }\omega\in\Lone(\G))\label{eq:strong_amen_chars__3}
\end{equation}
and such that $\left\Vert \lambda(\mu_{t})\right\Vert =1$ for each
$t\ge0$;
\item \label{enu:strong_amen_chars__4}condition \ref{enu:strong_amen_chars__3}
holds with $\left(\mu_{t}\right)_{t\ge0}$ being symmetric and norm
continuous.
\end{enumerate}
The equivalence continues to hold when $\lambda$ is replaced by $\tilde{\lambda}$
throughout \ref{enu:strong_amen_chars__2},\ref{enu:strong_amen_chars__3},
and \ref{enu:strong_amen_chars__4}.

\end{thm}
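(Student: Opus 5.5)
The plan is to prove the cycle \ref{enu:strong_amen_chars__1}$\implies$\ref{enu:strong_amen_chars__4}$\implies$\ref{enu:strong_amen_chars__3}$\implies$\ref{enu:strong_amen_chars__2}$\implies$\ref{enu:strong_amen_chars__1}, first for $\lambda$ and then for $\tilde{\lambda}$. I will take as the working characterization of co-amenability of $\hat{\G}$ the following, which I expect is in \cite{Bedos_Tuset_2003}: $\hat{\G}$ is co-amenable if and only if there is a character $\chi$ of $\Cz(\hat{\G})$ satisfying $\chi(\lambda(\omega))=\omega(\one)$ for $\omega\in\Lone(\G)$. In that case $\chi$ is the (bounded) co-unit of $\hat{\G}$, and by strictness it satisfies $\chi(\lambda(\mu))=\mu(\one)$ for all $\mu\in\CzU(\G)^{*}$.

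\ref{enu:strong_amen_chars__1}$\implies$\ref{enu:strong_amen_chars__4}. Strong amenability implies amenability \cite[Theorem~3.2]{Bedos_Tuset_2003}. Theorem~\ref{thm:amen_chars}\ref{enu:amen_chars__3} then gives a symmetric, norm-continuous convolution semigroup of states with $\Vert\omega\star\mu_{t}-\mu_{t}\Vert\to0$. Since $\lambda$ is contractive, (\ref{eq:strong_amen_chars__3}) follows. For the norm condition, $1\ge\Vert\lambda(\mu_{t})\Vert\ge|\chi(\lambda(\mu_{t}))|=\mu_{t}(\one)=1$.

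\ref{enu:strong_amen_chars__4}$\implies$\ref{enu:strong_amen_chars__3} is trivial. For \ref{enu:strong_amen_chars__3}$\implies$\ref{enu:strong_amen_chars__2}, take the net $X_{t}:=\lambda(\mu_{t})$ indexed by $t\to\infty$. Since $\lambda$ is multiplicative, $\lambda(\omega)X_{t}-X_{t}=\lambda(\omega\star\mu_{t}-\mu_{t})$, which tends to $0$ in norm.

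\ref{enu:strong_amen_chars__2}$\implies$\ref{enu:strong_amen_chars__1} is the main step. Regard everything as acting on $\Ltwo(\G)$ and choose unit vectors $\xi_{i}$ with $\Vert X_{i}\xi_{i}\Vert\to1$. Define states of $\Cz(\hat{\G})$ by
\[
\phi_{i}(y):=\Vert X_{i}\xi_{i}\Vert^{-2}\langle yX_{i}\xi_{i},X_{i}\xi_{i}\rangle
\]
and let $\phi$ be a $w^{*}$-cluster point, passing to a subnet. For a state $\omega\in\Lone(\G)$ we have $\Vert\lambda(\omega)X_{i}\xi_{i}-X_{i}\xi_{i}\Vert\to0$. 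This gives $\phi(\lambda(\omega))=1$, and since $\phi_{i}$ is a vector state it also gives
\[
\phi\bigl((\lambda(\omega)-\one)^{*}(\lambda(\omega)-\one)\bigr)=0 .
\]
So $\lambda(\omega)$ lies in the multiplicative domain of $\phi$ with $\phi(\lambda(\omega)y)=\phi(y)$ for every $y$. By linearity, $\phi(\lambda(\omega)y)=\omega(\one)\phi(y)$ for all $\omega\in\Lone(\G)$. Because $\lambda(\Lone(\G))$ is norm-dense in $\Cz(\hat{\G})$, $\phi$ is a character with $\phi(\lambda(\omega))=\omega(\one)$, hence $\hat{\G}$ is co-amenable. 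The delicate points are that $\phi$ is nonzero (this is where $\Vert X_{i}\Vert=1$ is used) and that the cluster point is taken on $\Cz(\hat{\G})$ rather than on the multiplier algebra.

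For the $\tilde{\lambda}$ version, note that $\tilde{\lambda}$ is also a contractive homomorphism. The forward direction uses the same semigroup: contractivity gives the limit, and for the norm we use $\chi\circ\hat{\tau}_{i/4}=\chi$ on analytic elements, since the co-unit is invariant under the scaling group, which yields $\chi(\tilde{\lambda}(\mu_{t}))=1$. For the converse, the same multiplicative-domain argument with $\tilde{\lambda}(\omega)=\hat{\tau}_{i/4}(\lambda(\omega))$ produces a character $\phi$ with $\phi(\tilde{\lambda}(\omega))=\omega(\one)$. I would then use density of $\tilde{\lambda}(\Lone(\G))$ and invariance of characters under the analytic extension of $\hat{\tau}$ to turn this into a co-unit for $\Cz(\hat{\G})$. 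This last transfer is the point I expect to need the most care.
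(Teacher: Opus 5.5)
Your proposal is correct. For (1)$\Rightarrow$(4)$\Rightarrow$(3)$\Rightarrow$(2) it coincides with the paper: Theorem~\ref{thm:amen_chars}, contractivity of $\lambda$, and $1=\mu_t(\one)=\hat{\epsilon}(\lambda(\mu_t))\le\Vert\lambda(\mu_t)\Vert$. The difference is in (2)$\Rightarrow$(1).

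The paper argues purely with norms. Since $\Vert X_i\Vert=1$,
\[
|\omega(\one)|=\Vert\omega(\one)X_i\Vert\le\Vert\lambda(\omega)X_i-\omega(\one)X_i\Vert+\Vert\lambda(\omega)\Vert\qquad(\omega\in\Lone(\G)),
\]
so $|\omega(\one)|\le\Vert\lambda(\omega)\Vert$. Hence $\lambda(\omega)\mapsto\omega(\one)$ is a well-defined contractive functional on $\lambda(\Lone(\G))$, and its extension to $\Cz(\hat{\G})$ is a bounded co-unit.

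You instead pass to the almost-invariant vectors $X_i\xi_i$, take a $w^*$-cluster point $\phi$ of the associated vector states, and use Cauchy--Schwarz to show $\phi$ is a character. This is the Hilbert-space form of the same idea and gives positivity for free. It does, however, cost bookkeeping that the one-line estimate avoids:
\begin{itemize}
\item For a general net you cannot always force $\Vert X_i\xi_i\Vert\to1$; a bound $\Vert X_i\xi_i\Vert\ge\frac12$ is all you need.
\item $(\lambda(\omega)-\one)^*(\lambda(\omega)-\one)\notin\Cz(\hat{\G})$, so its $\phi$-value is not a $w^*$-limit. It is cleaner to use $\phi(\lambda(\omega)^*\lambda(\omega))=1=\phi(\lambda(\omega))$ together with the fact that $\phi$ is a state.
\item Cauchy--Schwarz then gives $\phi(y\lambda(\omega))=\phi(y)$, not $\phi(\lambda(\omega)y)=\phi(y)$. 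This is equally sufficient for your conclusion.
\end{itemize}

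For $\tilde{\lambda}$ the paper only remarks that the arguments are unchanged, so the step you flag is implicit there too. It needs no invariance of arbitrary characters. Since $(\tau_t\otimes\hat{\tau}_t)(W)=W$, one has $\hat{\tau}_t(\lambda(\omega))=\lambda(\omega\circ\tau_{-t})$. So for $\omega\in\Lone(\G)$ entire analytic for the scaling group, $\lambda(\omega)=\tilde{\lambda}(\omega')$, where $\omega'$ is the value of the entire extension of $t\mapsto\omega\circ\tau_t$ at a suitable imaginary point. Moreover $\omega'(\one)=\omega(\one)$, because $\tau_t(\one)=\one$. This gives:
\begin{itemize}
\item density of $\tilde{\lambda}(\Lone(\G))$ in $\Cz(\hat{\G})$;
\item containment $\tilde{\lambda}(\Lone(\G))\subseteq\Cz(\hat{\G})$, by the same identity plus contractivity of $\tilde{\lambda}$;
\item the transfer of either $|\omega(\one)|\le\Vert\tilde{\lambda}(\omega)\Vert$ or your identity $\phi(\tilde{\lambda}(\omega))=\omega(\one)$ from $\tilde{\lambda}$ to $\lambda$, first on a dense set of $\omega$ and hence everywhere.
\end{itemize}
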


\begin{proof}
The implications \ref{enu:strong_amen_chars__4}$\implies$\ref{enu:strong_amen_chars__3}$\implies$\ref{enu:strong_amen_chars__2}
are immediate. It remains to prove that \ref{enu:strong_amen_chars__2}$\implies$\ref{enu:strong_amen_chars__1}
and \ref{enu:strong_amen_chars__1}$\implies$\ref{enu:strong_amen_chars__4}.

\ref{enu:strong_amen_chars__2}$\implies$\ref{enu:strong_amen_chars__1}.
Assume a net $\left(X_{i}\right)_{i\in\mathcal{I}}$ as in \ref{enu:strong_amen_chars__2}
is given. Then for every $\omega\in\Lone(\G)$ we have $\left\Vert \lambda(\omega)X_{i}-\omega(\one)X_{i}\right\Vert \xrightarrow[i\in\mathcal{I}]{}0$.
Define a functional $\mathtt{e}:\lambda\left(\Lone(\G)\right)\to\C$
by $\mathtt{e}(\lambda(\omega)):=\omega(\one)$ for $\omega\in\Lone(\G)$.
Then $\mathtt{e}$ is well-defined and contractive, because for each
$\omega$, using the fact that $\left(X_{i}\right)_{i\in\mathcal{I}}$
consists of norm one operators, we obtain
\[
\left|\omega(\one)\right|=\left|\omega(\one)X_{i}\right|\le\left\Vert \lambda(\omega)X_{i}-\omega(\one)X_{i}\right\Vert +\left\Vert \lambda(\omega)X_{i}\right\Vert \le\left\Vert \lambda(\omega)X_{i}-\omega(\one)X_{i}\right\Vert +\left\Vert \lambda(\omega)\right\Vert \xrightarrow[i\in\mathcal{I}]{}\left\Vert \lambda(\omega)\right\Vert .
\]
So $\mathtt{e}$ extends to all of $\Cz(\hat{\G})$, proving that
$\hat{\G}$ is co-amenable.

\ref{enu:strong_amen_chars__1}$\implies$\ref{enu:strong_amen_chars__4}.
Since $\hat{\G}$ is co-amenable, $\G$ is amenable. By Theorem~\ref{thm:amen_chars},
there is a symmetric, norm-continuous convolution semigroup of states
$\left(\mu_{t}\right)_{t\ge0}$ on $\G$ such that $\left\Vert \omega\star\mu_{t}-\mu_{t}\right\Vert \xrightarrow[t\to\infty]{}0$
for each state $\omega\in\Lone(\G)$. Since $\lambda:\Lone(\G)\to\M(\Cz(\hat{\G}))$
is contractive, (\ref{eq:strong_amen_chars__3}) holds. Co-amenability
of $\hat{\G}$ implies that for every $t\ge0$ we have $\left\Vert \lambda(\mu_{t})\right\Vert =1$,
because $1=\left|\mu_{t}(\one)\right|=\left|\hat{\epsilon}\left(\lambda(\mu_{t})\right)\right|\le\left\Vert \lambda(\mu_{t})\right\Vert \le\left\Vert \mu_{t}\right\Vert =1$.

Replacing $\lambda$ by $\tilde{\lambda}$ leaves all arguments unchanged.
\end{proof}

The following theorem is the main result of this paper. The primary
challenge is to combine two convolution semigroups having two distinct
desirable properties into a single convolution semigroup admitting
both properties. Note that one of these properties has to do with
the behavior of the convolution semigroup as time goes to zero, while
the other has to do with what happens as time goes to infinity. This
is interesting from the functional analytic perspective in addition
to the locally compact (quantum) group perspective.
\begin{thm}
\label{thm:strong_amen_dual_non_T_Haagerup_chars}Let $\G$ be a second
countable locally compact quantum group. Then the following two conditions
are equivalent:
\begin{enumerate}
\item \label{enu:strong_amen_dual_non_T_chars__1}$\G$ is strongly amenable
(that is, $\hat{\G}$ is co-amenable) and $\hat{\G}$ does \emph{not}
have property (T);
\item \label{enu:strong_amen_dual_non_T_chars__2}there exists a symmetric,
$w^{*}$-continuous, but \emph{not norm-continuous}, convolution semigroup
of states $\left(\mu_{t}\right)_{t\ge0}$ on $\G$ such that
\begin{equation}
\big\Vert\tilde{\lambda}(\omega\star\mu_{t}-\mu_{t})\big\Vert\xrightarrow[t\to\infty]{}0\qquad(\forall\text{ state }\omega\in\Lone(\G))\label{eq:strong_amen_dual_non_T_chars__2}
\end{equation}
and $\big\Vert\tilde{\lambda}(\mu_{t})\big\Vert=1$ for each $t\ge0$.
\end{enumerate}
Furthermore, the following two conditions are equivalent:
\begin{enumerate}[resume]
\item \label{enu:strong_amen_dual_Haagerup_chars__1}$\G$ is strongly amenable
(that is, $\hat{\G}$ is co-amenable) and $\hat{\G}$ has the Haagerup
property;
\item \label{enu:strong_amen_dual_Haagerup_chars__2}there exists a symmetric,
$w^{*}$-continuous convolution semigroup of states $\left(\mu_{t}\right)_{t\ge0}$
on $\G$ such that
\[
\big\Vert\tilde{\lambda}(\omega\star\mu_{t}-\mu_{t})\big\Vert\xrightarrow[t\to\infty]{}0\qquad(\forall\text{ state }\omega\in\Lone(\G))
\]
and $\tilde{\lambda}(\mu_{t})\in\Cz(\hat{\G})$ with $\big\Vert\tilde{\lambda}(\mu_{t})\big\Vert=1$
for each $t>0$.
\end{enumerate}
\end{thm}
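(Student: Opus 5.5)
The plan is to get the two reverse implications cheaply from earlier results. The two forward implications will come from a bounded perturbation of a ``good'' $C_{0}$-semigroup by the generator of the norm-continuous semigroup built in Theorem~\ref{thm:amen_chars}. Asymptotic invariance will then be controlled by a spectral argument on $\Ltwo(\G)$.

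\emph{Reverse directions.} In \ref{enu:strong_amen_dual_non_T_chars__2}, and likewise in \ref{enu:strong_amen_dual_Haagerup_chars__2}, the net $X_{t}:=\tilde{\lambda}(\mu_{t})$ satisfies the $\tilde{\lambda}$-version of condition \ref{enu:strong_amen_chars__2} of Theorem~\ref{thm:strong_amen_chars}: indeed $\tilde{\lambda}$ is a homomorphism, so $\tilde{\lambda}(\omega)X_{t}-X_{t}=\tilde{\lambda}(\omega\star\mu_{t}-\mu_{t})$. Hence $\hat{\G}$ is co-amenable. For \ref{enu:strong_amen_dual_non_T_chars__2}$\implies$\ref{enu:strong_amen_dual_non_T_chars__1}, the family $(\mu_{t})$ as $t\to0^{+}$ is a net of states converging $w^{*}$ but not in norm to $\epsilon$, so $\hat{\G}$ fails (T) by \cite[Theorem~6.1]{Daws_Skalski_Viselter__prop_T}. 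For \ref{enu:strong_amen_dual_Haagerup_chars__2}$\implies$\ref{enu:strong_amen_dual_Haagerup_chars__1}, I would invoke the convolution-semigroup characterization of the Haagerup property in \cite[Section~4]{Skalski_Viselter__convolution_semigroups}. That characterization asks for a symmetric $w^{*}$-continuous semigroup with $\tilde{\lambda}(\mu_{t})\in\Cz(\hat{\G})$; here $(\tilde{\lambda}(\mu_{t}))$ is a selfadjoint $C_{0}$-semigroup by Theorem~\ref{thm:SV_JMPA_3.2_3.3_3.4}, which supplies the needed approximate identity.

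\emph{Construction.} By \cite[Section~4]{Skalski_Viselter__convolution_semigroups}, the failure of (T) (respectively the Haagerup property) for $\hat{\G}$ gives a symmetric $w^{*}$-continuous semigroup $(\nu_{t})$ that is not norm continuous (respectively has $\tilde{\lambda}(\nu_{t})\in\Cz(\hat{\G})$ for $t>0$). Let $\mu$ be the symmetric state from Theorem~\ref{thm:KNR__amen_by_single_state} and Remark~\ref{rem:amen_by_single_state_symm}. Set $T_{1}(t):=R^{\mathrm{u}}_{\nu_{t}}$ with generator $A_{1}$, and $A_{2}:=R^{\mathrm{u}}_{\mu-\epsilon}\in B(\CzU(\G))$. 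By Theorem~\ref{thm:C0_bdd_perturb}, $A_{1}+A_{2}$ generates a $C_{0}$-semigroup $S(t)$.
\begin{itemize}
\item By the Trotter formula, $S(t)$ is an SOT-limit of products of maps $R^{\mathrm{u}}_{\rho}$ with $\rho$ a state. So $S(t)$ is contractive, completely positive and commutes with every $L^{\mathrm{u}}_{\nu}$.
\item Theorem~\ref{thm:SV_JMPA_3.2_3.3_3.4} then yields a $w^{*}$-continuous convolution semigroup $(\mu_{t})$ with $S(t)=R^{\mathrm{u}}_{\mu_{t}}$.
\item Each $\mu_{t}$ is a state, since $\mu_{t}(\one)$ is a limit of $1$'s.
\item Symmetry: since $(\rho_{1}\star\rho_{2})\circ\Rant^{\mathrm{u}}=(\rho_{2}\circ\Rant^{\mathrm{u}})\star(\rho_{1}\circ\Rant^{\mathrm{u}})$, composing with $\Rant^{\mathrm{u}}$ interchanges the two Trotter orderings. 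Both orderings have the same limit, so $\mu_{t}\circ\Rant^{\mathrm{u}}=\mu_{t}$.
\item Non-norm-continuity: the Dyson--Phillips series gives $\Vert S(t)-T_{1}(t)\Vert=O(t)$ as $t\to0^{+}$. Hence norm continuity of $S$ would force that of $T_{1}$, contradicting the choice of $(\nu_{t})$ via Theorem~\ref{thm:SV_JMPA_3.2_3.3_3.4}.
\item Haagerup case: I would pass to the $\Ltwo$-level picture $\tilde{\lambda}(\mu_{t})=\sum_{n}\tilde{S}_{n}(t)$. Each Dyson--Phillips term is an integral whose integrand contains a factor $\tilde{\lambda}(\nu_{s})$ or $\tilde{\lambda}(\nu_{t-s})$ lying in $\Cz(\hat{\G})$ for interior $s$. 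The technical point is making the integral converge in norm, not merely strictly, so that each term, and hence the norm-convergent sum, lies in $\Cz(\hat{\G})$.
\item Norm one: $\hat{\epsilon}$ is a character on $\Cz(\hat{\G})=\CzU(\hat{\G})$ with $\hat{\epsilon}(\tilde{\lambda}(\mu_{t}))=\mu_{t}(\one)=1$. Combined with contractivity this gives $\Vert\tilde{\lambda}(\mu_{t})\Vert=1$.
\end{itemize}

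\emph{Asymptotic invariance (main obstacle).} On $\Ltwo(\G)$ write $\tilde{\lambda}(\mu_{t})=e^{-tK}$ with $K=H_{1}+H_{2}$. Here $e^{-tH_{1}}=\tilde{\lambda}(\nu_{t})$, so $H_{1}\ge0$, and $H_{2}=1-\tilde{\lambda}(\mu)$ is bounded and positive. I expect the delicate part to be justifying that the generator on $\Ltwo$ is this form sum. Since $\tilde{\lambda}(\omega\star\mu_{t}-\mu_{t})=(\tilde{\lambda}(\omega)-1)e^{-tK}$, the goal is to show this tends to $0$ in norm. Fix $\omega$ and $\e>0$, and pick $n$ with $\Vert(\tilde{\lambda}(\omega)-1)\tilde{\lambda}(\mu)^{n}\Vert\le\e$; this is possible by the choice of $\mu$ and contractivity of $\tilde{\lambda}$. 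Let $E$ be the spectral projection of $K$ on $[0,\delta]$. For a unit vector $\xi\in E\Ltwo(\G)$ we have $\langle H_{2}\xi,\xi\rangle\le\langle K\xi,\xi\rangle\le\delta$, so $\Vert\tilde{\lambda}(\mu)\xi-\xi\Vert\le\sqrt{2\delta}$ because $\tilde{\lambda}(\mu)$ is a selfadjoint contraction. Iterating gives $\Vert\tilde{\lambda}(\mu)^{n}\xi-\xi\Vert\le n\sqrt{2\delta}$, and therefore $\Vert(\tilde{\lambda}(\omega)-1)E\Vert\le\e+2n\sqrt{2\delta}$. Since $\Vert(1-E)e^{-tK}\Vert\le e^{-t\delta}$, we get
\[
\Vert(\tilde{\lambda}(\omega)-1)e^{-tK}\Vert\le\e+2n\sqrt{2\delta}+2e^{-t\delta}.
\]
Choosing $\delta$ small with $n$ fixed and then letting $t\to\infty$ gives the claimed limit.
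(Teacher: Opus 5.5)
Your architecture is the paper's. You combine the same two symmetric semigroups on $\CzU(\G)$ via Theorem~\ref{thm:C0_bdd_perturb}: one from the Section~4 results of Skalski--Viselter, the other the compound Poisson semigroup $\exp_\star(t(\mu-\epsilon))$. Symmetry comes from the two Trotter orderings and non-norm-continuity from the $O(t)$ Dyson--Phillips bound. Your asymptotic-invariance estimate is correct and is a tidy inline substitute for Lemma~\ref{lem:C_e_to_minus_tA_goes_to_zero}. Instead of comparing spectral projections of $A_2$ and $A_1+A_2$ abstractly, you return to the discrete powers $\tilde{\lambda}(\mu)^n$ and use $\Vert H_2\xi\Vert^2\le 2\langle H_2\xi,\xi\rangle\le 2\delta$ on $E_K([0,\delta])$.

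\textbf{The gap is the state property.} The sentence ``each $\mu_t$ is a state, since $\mu_t(\one)$ is a limit of $1$'s'' is not justified. The Trotter formula only gives $R^{\mathrm{u}}_{\rho_n}\to R^{\mathrm{u}}_{\mu_t}$ strongly on $\CzU(\G)$, equivalently $\rho_n\to\mu_t$ weak$^*$ (via $\rho=\epsilon\circ R^{\mathrm{u}}_{\rho}$). Neither controls the strict extension at $\one$, so mass may escape. Strong convergence of the convolution operators does not prevent this: on $\R$, with $m_n$ the normalized Lebesgue measure on $[-n,n]$, the probability measures $\rho_n=\frac12\delta_0+\frac12 m_n$ satisfy $R_{\rho_n}\to\frac12\id$ strongly on $\Cz(\R)$. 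Your ``norm one'' step feeds $\mu_t(\one)=1$ into $\hat{\epsilon}$, so it inherits the gap.

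The paper proves the state property \emph{using} co-amenability. By Lemma~2.17 of Skalski--Viselter, the semigroups $(\tilde{\lambda}(\mu^i_t))_{t\ge0}$ are strictly continuous, hence $C_0$-semigroups on the Banach space $\Cz(\hat{\G})$. Running Trotter there gives $(\tilde{\lambda}(\mu^1_{t/n})\tilde{\lambda}(\mu^2_{t/n}))^n\to\tilde{\lambda}(\mu_t)$ strictly. Applying the strictly continuous character $\hat{\epsilon}$ then gives $\mu_t(\one)=\lim_n(1\cdot1)^n=1$.

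A fix also exists inside your own framework. Write the generator as $(A_1-\id)+R^{\mathrm{u}}_{\mu}$ and expand by Dyson--Phillips in the \emph{positive} perturbation $R^{\mathrm{u}}_{\mu}$. Inductively, the $n$-th term is $R^{\mathrm{u}}_{\sigma_n(t)}$ with $\sigma_n(t)$ positive of mass $t^n/n!$ (monotone convergence along an approximate unit). Hence $\mu_t=e^{-t}\sum_n\sigma_n(t)$ has mass $1$.

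\textbf{Two steps you flag but do not carry out.}

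First, the identification $\tilde{\lambda}(\mu_t)=e^{-t(H_1+H_2)}$ involves no form-sum subtlety, since $H_2$ is bounded. The paper combines $\tilde{\lambda}(\mu_t)=\mathrm{WOT}\text{-}\lim_n(\tilde{\lambda}(\mu^1_{t/n})\tilde{\lambda}(\mu^2_{t/n}))^n$ with the Trotter formula on $\Ltwo(\G)$. The WOT limit comes from the weak$^*$ Trotter limit, $\tilde\lambda$ being a homomorphism, and Lemmas~2.3 and 2.17 of Skalski--Viselter.

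Second, in the Haagerup case you do not need the Dyson--Phillips integrals to converge in norm. The paper uses the \emph{second} form of the recursion, $S_{n+1}(t)=\int_0^tS_n(t-s)A_2e^{-sA_1}\,\mathrm{d}s$, which puts the $\Cz(\hat{\G})$-factor on the right. It then shows $\Vert S_n(t)(\one-e_j)\Vert\to0$ for an approximate identity $(e_j)$ of $\Cz(\hat{\G})$ by splitting at $t_0=\min(\varepsilon,t)$. On $[t_0,t]$ it uses $\Vert e^{-sA_1}(\one-e_j)\Vert\le\Vert e^{-t_0A_1}(\one-e_j)\Vert$; on $[0,t_0]$ it uses only boundedness. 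Since $\tilde{\lambda}(\mu_t)\in\M(\Cz(\hat{\G}))$ and $\tilde{\lambda}(\mu_t)(\one-e_j)\to0$, it follows that $\tilde{\lambda}(\mu_t)\in\Cz(\hat{\G})$.

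\textbf{A smaller point in the reverse Haagerup direction.} Strong continuity on $\Ltwo(\G)$ alone does not make $(\tilde{\lambda}(\mu_t))_{t\to0^+}$ an approximate identity of $\Cz(\hat{\G})$. What is needed is strict continuity (Lemma~2.17 of Skalski--Viselter), which the paper combines with Proposition~4.9 there.
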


The implication \ref{enu:strong_amen_dual_non_T_chars__2}$\implies$\ref{enu:strong_amen_dual_non_T_chars__1}
follows directly from \cite[Theorem~6.1]{Daws_Skalski_Viselter__prop_T}
and Theorem~\ref{thm:strong_amen_chars}. The implication \ref{enu:strong_amen_dual_Haagerup_chars__2}$\implies$\ref{enu:strong_amen_dual_Haagerup_chars__1}
follows from \cite[Lemma~2.17 implication (a)$\implies$(c') and Proposition~4.9 implication (c)$\implies$(a)]{Skalski_Viselter__convolution_semigroups}
and from Theorem~\ref{thm:strong_amen_chars}. For the converse implications
we need the next spectral-theoretic lemma. Recall that the exponential
function is \emph{not} operator monotone.
\begin{lem}
\label{lem:C_e_to_minus_tA_goes_to_zero}Let $\H$ be a Hilbert space
and let $A$ be a (generally unbounded) positive selfadjoint operator
on $\H$. Consider the spectral measure $E_{A}$ of $A$ and the contractive
selfadjoint $C_{0}$-semigroup $\left(e^{-tA}\right)_{t\ge0}$ associated
to $A$. Let $C\in B(\H)$.
\begin{enumerate}
\item \label{enu:C_e_to_minus_tA_goes_to_zero__1}We have $\lim_{t\to\infty}\left\Vert Ce^{-tA}\right\Vert =0$
$\iff$ $\lim_{r\to0^{+}}\left\Vert CE_{A}\left(\left[0,r\right]\right)\right\Vert =0$.
\item \label{enu:C_e_to_minus_tA_goes_to_zero__2}Let $\mathbf{A}$ be another
(generally unbounded) positive selfadjoint operator on $\H$ such
that $A\le\mathbf{A}$ in the usual sense of forms \cite[Definition~10.5]{Schmudgen__unbounded_sa_oper_book}
and write $E_{\mathbf{A}}$ for the spectral measure of $\mathbf{A}$.
Then:
\begin{enumerate}
\item \label{enu:C_e_to_minus_tA_goes_to_zero__2_a}For every $r_{1},r_{2}>0$
we have $\left\Vert E_{A}\left([r_{2},\infty)\right)E_{\mathbf{A}}\left(\left[0,r_{1}\right]\right)\right\Vert \le\sqrt{\frac{r_{1}}{r_{2}}}$.
\item \label{enu:C_e_to_minus_tA_goes_to_zero__2_b}If the equivalent conditions
of part \ref{enu:C_e_to_minus_tA_goes_to_zero__1} hold, then also
$\lim_{t\to\infty}\left\Vert Ce^{-t\mathbf{A}}\right\Vert =0$.
\end{enumerate}
\end{enumerate}
\end{lem}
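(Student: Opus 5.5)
For part \ref{enu:C_e_to_minus_tA_goes_to_zero__1} we split $\H$ spectrally at a small threshold $r>0$. For ($\Leftarrow$), write $e^{-tA}=e^{-tA}E_{A}([0,r])+e^{-tA}E_{A}((r,\infty))$. This gives
\[
\left\Vert Ce^{-tA}\right\Vert \le\left\Vert CE_{A}([0,r])\right\Vert \left\Vert e^{-tA}\right\Vert +\left\Vert C\right\Vert e^{-tr}\le\left\Vert CE_{A}([0,r])\right\Vert +\left\Vert C\right\Vert e^{-tr}.
\]
Here we used that $E_{A}([0,r])$ commutes with $e^{-tA}$ and that $\Vert e^{-tA}E_{A}((r,\infty))\Vert\le e^{-tr}$ by the functional calculus. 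Given $\e>0$, choose $r$ so that the first term is below $\e$, and then let $t\to\infty$.

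For ($\Rightarrow$), note that on $E_{A}([0,r])\H$ the operator $e^{-tA}$ is invertible with $\Vert(e^{-tA}E_{A}([0,r]))^{-1}\Vert\le e^{tr}$ there. Hence $E_{A}([0,r])=e^{-tA}\,g_{t}(A)$ with $g_{t}(x):=e^{tx}\one_{[0,r]}(x)$, and
\[
\left\Vert CE_{A}([0,r])\right\Vert \le\left\Vert Ce^{-tA}\right\Vert e^{tr}.
\]
Fix $\e$ and pick $t$ with $\Vert Ce^{-tA}\Vert\le\e$. Then for $r\le1/t$ the right-hand side is at most $e\,\e$. Since $\Vert CE_{A}([0,r])\Vert$ is monotone in $r$, the limit is $0$.

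For part \ref{enu:C_e_to_minus_tA_goes_to_zero__2_a}, take a unit vector $\xi\in E_{\mathbf{A}}([0,r_{1}])\H$. Then $\xi$ lies in the form domain of $\mathbf{A}$ with $\Vert\mathbf{A}^{1/2}\xi\Vert^{2}\le r_{1}$. Since $A\le\mathbf{A}$ in the form sense, $\xi$ is in the form domain of $A$ and $\Vert A^{1/2}\xi\Vert^{2}\le r_{1}$. On the other hand, the functional calculus gives
\[
\Vert A^{1/2}\xi\Vert^{2}\ge\int_{[r_{2},\infty)}x\,\dd\langle E_{A}(x)\xi,\xi\rangle\ge r_{2}\Vert E_{A}([r_{2},\infty))\xi\Vert^{2}.
\]
Combining the two estimates gives $\Vert E_{A}([r_{2},\infty))\xi\Vert\le\sqrt{r_{1}/r_{2}}$, as claimed. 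The only care needed is the precise meaning of $\le$ for unbounded operators, namely inclusion of form domains together with the inequality of forms, as in \cite[Definition~10.5]{Schmudgen__unbounded_sa_oper_book}.

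For part \ref{enu:C_e_to_minus_tA_goes_to_zero__2_b}, by part \ref{enu:C_e_to_minus_tA_goes_to_zero__1} applied to $\mathbf{A}$ it suffices to show that $\Vert CE_{\mathbf{A}}([0,r_{1}])\Vert\to0$ as $r_{1}\to0^{+}$. This is the step where a monotonicity argument is needed, since $e^{-tA}\ge e^{-t\mathbf{A}}$ fails in general. For any $r_{2}>0$, decompose
\[
CE_{\mathbf{A}}([0,r_{1}])=CE_{A}([0,r_{2}))E_{\mathbf{A}}([0,r_{1}])+CE_{A}([r_{2},\infty))E_{\mathbf{A}}([0,r_{1}]).
\]
This yields
\[
\Vert CE_{\mathbf{A}}([0,r_{1}])\Vert\le\Vert CE_{A}([0,r_{2}])\Vert+\Vert C\Vert\sqrt{r_{1}/r_{2}},
\]
using part \ref{enu:C_e_to_minus_tA_goes_to_zero__2_a} and the bound $\Vert CE_{A}([0,r_{2}))\Vert\le\Vert CE_{A}([0,r_{2}])\Vert$. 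Given $\e>0$, first choose $r_{2}$ so that the first term is below $\e$, using the hypothesis from part \ref{enu:C_e_to_minus_tA_goes_to_zero__1}. Then choose $r_{1}$ small so that the second term is below $\e$. Applying part \ref{enu:C_e_to_minus_tA_goes_to_zero__1} to $\mathbf{A}$ now gives $\lim_{t\to\infty}\Vert Ce^{-t\mathbf{A}}\Vert=0$.
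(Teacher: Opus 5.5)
Your proof is correct and follows essentially the same route as the paper: the same spectral splitting and the bound $\|CE_A([0,r])\|\le e^{tr}\|Ce^{-tA}\|$ in part (1), the same form-domain estimate in part (2)(a), and the same decomposition of $CE_{\mathbf{A}}([0,r_1])$ along $E_A$ in part (2)(b). The differences are only cosmetic: the paper takes $r_0=\ln 2/t_0$ where you take $r\le 1/t$, and it couples $r_2:=\sqrt{r_1}$ where you choose $r_2$ first and then $r_1$.
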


\begin{proof}
\ref{enu:C_e_to_minus_tA_goes_to_zero__1}. For $(\impliedby)$, assume
that $\lim_{r\to0^{+}}\left\Vert CE_{A}\left(\left[0,r\right]\right)\right\Vert =0$.
For all $t,r>0$ we have $\left\Vert E_{A}\left(\left(r,\infty\right)\right)e^{-tA}\right\Vert \le e^{-tr}$.
Let $\e>0$ be given, and choose $r_{0}>0$ such that $\left\Vert CE_{A}\left(\left[0,r_{0}\right]\right)\right\Vert \le\e$.
Then for all $t>0$, 
\[
\left\Vert Ce^{-tA}\right\Vert \le\left\Vert CE_{A}\left(\left[0,r_{0}\right]\right)e^{-tA}\right\Vert +\left\Vert CE_{A}\left(\left(r_{0},\infty\right)\right)e^{-tA}\right\Vert \le\e+\left\Vert C\right\Vert e^{-tr_{0}}\xrightarrow[t\to\infty]{}\e,
\]
proving that $\lim_{t\to\infty}\left\Vert Ce^{-tA}\right\Vert =0$.

For $(\implies)$, assume that $\lim_{t\to\infty}\left\Vert Ce^{-tA}\right\Vert =0$.
Let $\e>0$ be given. Pick $t_{0}>0$ such that $\left\Vert Ce^{-t_{0}A}\right\Vert \le\e$,
and set $r_{0}:=\frac{\ln2}{t_{0}}$. Then $\left\Vert e^{t_{0}A}E_{A}\left(\left[0,r_{0}\right]\right)\right\Vert \le2$,
thus $\left\Vert CE_{A}\left(\left[0,r_{0}\right]\right)\right\Vert =\left\Vert Ce^{-t_{0}A}e^{t_{0}A}E_{A}\left(\left[0,r_{0}\right]\right)\right\Vert \le2\e$.
Since the function on $[0,\infty)$ given by $r\mapsto\left\Vert CE_{A}\left(\left[0,r\right]\right)\right\Vert $
increases, this proves that $\lim_{r\to0^{+}}\left\Vert CE_{A}\left(\left[0,r\right]\right)\right\Vert =0$.

\ref{enu:C_e_to_minus_tA_goes_to_zero__2}\ref{enu:C_e_to_minus_tA_goes_to_zero__2_a}.
Let $x\in\H$ and write $y:=E_{\mathbf{A}}\left(\left[0,r_{1}\right]\right)x$.
Since $A\le\mathbf{A}$ and $y\in D(\mathbf{A}^{\frac{1}{2}})$, we
have $y\in D(A^{\frac{1}{2}})$ and 
\[
\Vert A^{\frac{1}{2}}y\Vert^{2}\le\Vert\mathbf{A}^{\frac{1}{2}}y\Vert^{2}\le r_{1}\left\Vert y\right\Vert ^{2}.
\]
On the other hand, 
\[
\Vert A^{\frac{1}{2}}y\Vert^{2}\ge\Vert A^{\frac{1}{2}}E_{A}\left([r_{2},\infty)\right)y\Vert^{2}\ge r_{2}\left\Vert E_{A}\left([r_{2},\infty)\right)y\right\Vert ^{2}.
\]
Combining these we get 
\[
\left\Vert E_{A}\left([r_{2},\infty)\right)E_{\mathbf{A}}\left(\left[0,r_{1}\right]\right)x\right\Vert \le\sqrt{\frac{r_{1}}{r_{2}}}\left\Vert E_{\mathbf{A}}\left(\left[0,r_{1}\right]\right)x\right\Vert \le\sqrt{\frac{r_{1}}{r_{2}}}\left\Vert x\right\Vert ,
\]
proving the assertion.

\ref{enu:C_e_to_minus_tA_goes_to_zero__2}\ref{enu:C_e_to_minus_tA_goes_to_zero__2_b}.
Since $CE_{\mathbf{A}}\left(\left[0,r\right]\right)=C\left(E_{A}\left(\left[0,\sqrt{r}\right]\right)+E_{A}\left(\left(\sqrt{r},\infty\right)\right)\right)E_{\mathbf{A}}\left(\left[0,r\right]\right)$,
we may apply part \ref{enu:C_e_to_minus_tA_goes_to_zero__2}\ref{enu:C_e_to_minus_tA_goes_to_zero__2_a}
with $r_{1}:=r$ and $r_{2}:=\sqrt{r}$ to infer that
\[
\left\Vert CE_{\mathbf{A}}\left(\left[0,r\right]\right)\right\Vert \le\left\Vert CE_{A}\left(\left[0,\sqrt{r}\right]\right)\right\Vert +\sqrt[4]{r}\left\Vert C\right\Vert \xrightarrow[r\to0^{+}]{}0
\]
by part \ref{enu:C_e_to_minus_tA_goes_to_zero__1}. Utilizing part
\ref{enu:C_e_to_minus_tA_goes_to_zero__1} again, this time applied
to $\mathbf{A}$ instead of $A$, we infer that $\lim_{t\to\infty}\left\Vert Ce^{-t\mathbf{A}}\right\Vert =0$.
\end{proof}

\begin{proof}[Proof of Theorem~\ref{thm:strong_amen_dual_non_T_Haagerup_chars},
continued]
\ref{enu:strong_amen_dual_non_T_chars__1}$\implies$\ref{enu:strong_amen_dual_non_T_chars__2}.
Since $\hat{\G}$ does not have property (T), there exists a symmetric,
$w^{*}$-continuous, but not norm-continuous, convolution semigroup
of states $\left(\mu^{1}_{t}\right)_{t\ge0}$ on $\G$ by \cite[Theorem~4.6]{Skalski_Viselter__convolution_semigroups}.
Since $\hat{\G}$ is co-amenable, there exists a symmetric, norm-continuous,
convolution semigroup of states $\left(\mu^{2}_{t}\right)_{t\ge0}$
on $\G$ such that $\big\Vert\tilde{\lambda}(\omega\star\mu^{2}_{t}-\mu^{2}_{t})\big\Vert\xrightarrow[t\to\infty]{}0$
for each state $\omega\in\Lone(\G)$ by Theorem~\ref{thm:strong_amen_chars}.
For $i\in\left\{ 1,2\right\} $, consider the $C_{0}$-semigroup ${(R^{\mathrm{u}}_{\mu^{i}_{t}})}_{t\ge0}$
of completely positive maps of norm $1$ on $\CzU(\G)$, and let $A_{i}$
be the positive selfadjoint operator on $\Ltwo(\G)$ such that $\tilde{\lambda}(\mu^{i}_{t})=e^{-tA_{i}}$
for each $t\ge0$ (see Theorem~\ref{thm:SV_JMPA_3.2_3.3_3.4}). Note
that since $\left(\mu^{2}_{t}\right)_{t\ge0}$ is norm continuous
but $\left(\mu^{1}_{t}\right)_{t\ge0}$ is not, we get that ${(R^{\mathrm{u}}_{\mu^{2}_{t}})}_{t\ge0}$
is norm continuous but ${(R^{\mathrm{u}}_{\mu^{1}_{t}})}_{t\ge0}$
is not, and that $A_{2}$ is bounded but $A_{1}$ is not.

We now construct $\left(\mu_{t}\right)_{t\ge0}$ and make a few observations
about it by applying the Trotter product formula (see Theorem~\ref{thm:C0_bdd_perturb})
thrice. First, apply the product formula to the $C_{0}$-semigroups
${(R^{\mathrm{u}}_{\mu^{i}_{t}})}_{t\ge0}$, $i\in\left\{ 1,2\right\} $,
on the Banach space $\CzU(\G)$, which is possible because ${(R^{\mathrm{u}}_{\mu^{2}_{t}})}_{t\ge0}$
is norm continuous, to get that for every $t\ge0$ and $a\in\CzU(\G)$,
the limit 
\[
T^{\mathrm{u}}_{t}(a):=\lim_{n\to\infty}\left(R^{\mathrm{u}}_{\mu^{1}_{\frac{t}{n}}}\circ R^{\mathrm{u}}_{\mu^{2}_{\frac{t}{n}}}\right)^{n}(a)=\lim_{n\to\infty}R^{\mathrm{u}}_{(\mu^{2}_{\frac{t}{n}}\star\mu^{1}_{\frac{t}{n}})^{\star n}}(a)=\lim_{n\to\infty}R^{\mathrm{u}}_{(\mu^{1}_{\frac{t}{n}}\star\mu^{2}_{\frac{t}{n}})^{\star n}}(a)\in\CzU(\G)
\]
exists, and $\left(T^{\mathrm{u}}_{t}\right)_{t\ge0}$ is a contractive
$C_{0}$-semigroup on $\CzU(\G)$. Since the maps $R^{\mathrm{u}}_{\mu^{1}_{t}}$
and $R^{\mathrm{u}}_{\mu^{2}_{t}}$, $t\ge0$, are completely positive
and commute with $L^{\mathrm{u}}_{\nu}$, $\nu\in\CzU(\G)^{*}$, the
maps $T^{\mathrm{u}}_{t}$, $t\ge0$, have these properties as well.
In addition, since ${(R^{\mathrm{u}}_{\mu^{2}_{t}})}_{t\ge0}$ is
norm continuous but ${(R^{\mathrm{u}}_{\mu^{1}_{t}})}_{t\ge0}$ is
not, $\left(T^{\mathrm{u}}_{t}\right)_{t\ge0}$ is not norm continuous
either. Consequently, Theorem~\ref{thm:SV_JMPA_3.2_3.3_3.4} yields
a $w^{*}$-continuous, but not norm-continuous, convolution semigroup
$\left(\mu_{t}\right)_{t\ge0}$ of \emph{contractive positive functionals}
on $\G$ such that $T^{\mathrm{u}}_{t}:=R^{\mathrm{u}}_{\mu_{t}}$
for each $t\ge0$. From \cite[Lemmas~2.3 and 2.17]{Skalski_Viselter__convolution_semigroups}
we infer that for every $t\ge0$, 
\begin{equation}
\mu_{t}=w^{*}\text{-}\lim_{n\to\infty}\big(\mu^{2}_{\frac{t}{n}}\star\mu^{1}_{\frac{t}{n}}\big)^{\star n}=w^{*}\text{-}\lim_{n\to\infty}\big(\mu^{1}_{\frac{t}{n}}\star\mu^{2}_{\frac{t}{n}}\big)^{\star n}\label{eq:Trotter1_mu_t}
\end{equation}
and 
\begin{equation}
\tilde{\lambda}(\mu_{t})=\mathrm{WOT}\text{-}\lim_{n\to\infty}\tilde{\lambda}\big((\mu^{1}_{\frac{t}{n}}\star\mu^{2}_{\frac{t}{n}})^{\star n}\big)=\mathrm{WOT}\text{-}\lim_{n\to\infty}\big(\tilde{\lambda}(\mu^{1}_{\frac{t}{n}})\tilde{\lambda}(\mu^{2}_{\frac{t}{n}})\big)^{n}.\label{eq:Trotter1_lambda_tilde_mu_t}
\end{equation}
From (\ref{eq:Trotter1_mu_t}) it follows that $\left(\mu_{t}\right)_{t\ge0}$
is symmetric. On the other hand, applying the Trotter product formula
to the contractive $C_{0}$-semigroups $(\tilde{\lambda}(\mu^{i}_{t})=e^{-tA_{i}})_{t\ge0}$,
$i\in\left\{ 1,2\right\} $, on $\Ltwo(\G)$, which is possible because
$A_{2}$ is bounded, we obtain for every $t\ge0$,
\begin{equation}
e^{-t(A_{1}+A_{2})}=\mathrm{SOT}\text{-}\lim_{n\to\infty}\big(e^{-\frac{t}{n}A_{1}}e^{-\frac{t}{n}A_{2}}\big)^{n}=\mathrm{SOT}\text{-}\lim_{n\to\infty}\big(\tilde{\lambda}(\mu^{1}_{\frac{t}{n}})\tilde{\lambda}(\mu^{2}_{\frac{t}{n}})\big)^{n}\qquad(\forall t\ge0).\label{eq:Trotter2}
\end{equation}
We deduce from (\ref{eq:Trotter1_lambda_tilde_mu_t}) and (\ref{eq:Trotter2})
that $\tilde{\lambda}(\mu_{t})=e^{-t(A_{1}+A_{2})}$ for all $t\ge0$. 

We now show that $\left(\mu_{t}\right)_{t\ge0}$ consists of states,
which is equivalent to the operator $\tilde{\lambda}(\mu_{t})$ having
norm 1 for each $t\ge0$ since $\hat{\G}$ is co-amenable, as in the
proof of Theorem~\ref{thm:strong_amen_chars} implication \ref{enu:strong_amen_chars__1}$\implies$\ref{enu:strong_amen_chars__4}.
By \cite[Lemma~2.17]{Skalski_Viselter__convolution_semigroups}, for
$i\in\left\{ 1,2\right\} $, the semigroup $(\tilde{\lambda}(\mu^{i}_{t}))_{t\ge0}$
is, in fact, continuous in the strict topology of $\M(\Cz(\hat{\G}))$,
and thus it can be viewed as a (contractive) $C_{0}$-semigroup on
the Banach space $\Cz(\hat{\G})$. Applying the Trotter product formula
to these $C_{0}$-semigroups on $\Cz(\hat{\G})$ yields that for every
$t\ge0$ the limit 
\[
\lim_{n\to\infty}\big(\tilde{\lambda}(\mu^{1}_{\frac{t}{n}})\tilde{\lambda}(\mu^{2}_{\frac{t}{n}})\big)^{n}
\]
actually exists (and necessarily equals $\tilde{\lambda}(\mu_{t})$
by (\ref{eq:Trotter1_lambda_tilde_mu_t})) in the strict topology
of $\M(\Cz(\hat{\G}))$. Using that $\hat{\G}$ is co-amenable, we
may therefore apply the co-unit $\hat{\epsilon}\in\Cz(\hat{\G})^{*}$,
or to be precise, its strict extension to $\M(\Cz(\hat{\G}))$, to
this limit to obtain that for each $t\ge0$,
\[
\mu_{t}(\one)=\hat{\epsilon}(\tilde{\lambda}(\mu_{t}))=\lim_{n\to\infty}\hat{\epsilon}\left[\big(\tilde{\lambda}(\mu^{1}_{\frac{t}{n}})\tilde{\lambda}(\mu^{2}_{\frac{t}{n}})\big)^{n}\right]=\lim_{n\to\infty}\left(\hat{\epsilon}\big(\tilde{\lambda}(\mu^{1}_{\frac{t}{n}})\big)\hat{\epsilon}\big(\tilde{\lambda}(\mu^{2}_{\frac{t}{n}})\big)\right)^{n}=1
\]
for $\left(\mu^{1}_{t}\right)_{t\ge0}$ and $\left(\mu^{2}_{t}\right)_{t\ge0}$
consist of states. This proves that $\left(\mu_{t}\right)_{t\ge0}$
also consists of states, as claimed.

Finally, we prove (\ref{eq:strong_amen_dual_non_T_chars__2}). If
$\omega\in\Lone(\G)$ is a state, then since $\big\Vert\big(\tilde{\lambda}(\omega)-\one\big)e^{-tA_{2}}\big\Vert=\big\Vert\tilde{\lambda}(\omega\star\mu^{2}_{t}-\mu^{2}_{t})\big\Vert\xrightarrow[t\to\infty]{}0$,
Lemma~\ref{lem:C_e_to_minus_tA_goes_to_zero} \ref{enu:C_e_to_minus_tA_goes_to_zero__2}\ref{enu:C_e_to_minus_tA_goes_to_zero__2_b}
applies with $A:=A_{2}$, $\mathbf{A}:=A_{1}+A_{2}$ (as $A_{2}$
is bounded), and $C:=\tilde{\lambda}(\omega)-\one$ to yield that
also $\big\Vert\tilde{\lambda}(\omega\star\mu_{t}-\mu_{t})\big\Vert=\big\Vert\big(\tilde{\lambda}(\omega)-\one\big)e^{-t(A_{1}+A_{2})}\big\Vert\xrightarrow[t\to\infty]{}0$.
That is, (\ref{eq:strong_amen_dual_non_T_chars__2}) holds true.

\ref{enu:strong_amen_dual_Haagerup_chars__1}$\implies$\ref{enu:strong_amen_dual_Haagerup_chars__2}.
Similarly to the above proof of the implication \ref{enu:strong_amen_dual_non_T_chars__1}$\implies$\ref{enu:strong_amen_dual_non_T_chars__2}
we construct two symmetric, $w^{*}$-continuous convolution semigroups
of states on $\G$, $\left(\mu^{1}_{t}\right)_{t\ge0}$ and $\left(\mu^{2}_{t}\right)_{t\ge0}$.
The latter is constructed exactly as above using Theorem~\ref{thm:strong_amen_chars}.
Since $\hat{\G}$ has the Haagerup property, there exists a symmetric,
$w^{*}$-continuous convolution semigroup of states $\left(\mu^{1}_{t}\right)_{t\ge0}$
on $\G$ such that $\tilde{\lambda}(\mu^{1}_{t})\in\Cz(\hat{\G})$
for each $t>0$ by \cite[Theorem~4.12]{Skalski_Viselter__convolution_semigroups}.
We may and do assume that $\hat{\G}$ is not compact, hence $\left(\mu^{1}_{t}\right)_{t\ge0}$
is not norm continuous; in particular, everything established in the
proof of \ref{enu:strong_amen_dual_non_T_chars__1}$\implies$\ref{enu:strong_amen_dual_non_T_chars__2}
applies to this part of the proof too. As above, for $i\in\left\{ 1,2\right\} $,
let $A_{i}$ be the positive selfadjoint operator on $\Ltwo(\G)$
with $\tilde{\lambda}(\mu^{i}_{t})=e^{-tA_{i}}$ for all $t\ge0$,
and let $\left(\mu_{t}\right)_{t\ge0}$ be the symmetric, $w^{*}$-continuous
convolution semigroup of states on $\G$ such that $\tilde{\lambda}(\mu_{t})=e^{-t(A_{1}+A_{2})}$
for all $t\ge0$.

It is left to prove that $\tilde{\lambda}(\mu_{t})\in\Cz(\hat{\G})$
for all $t>0$. To this end we use the Dyson--Phillips theorem (see
Theorem~\ref{thm:C0_bdd_perturb}). Recursively define a family $\left(S_{n}(t)\right)_{t\ge0,n\in\Z_{+}}$
of bounded operators on $\Ltwo(\G)$ by $S_{0}(t):=e^{-tA_{1}}$ and
\[
S_{n+1}(t):=\mathrm{SOT}\text{-}\int^{t}_{0}S_{n}(t-s)A_{2}e^{-sA_{1}}\d s.
\]
Then for every $t\ge0$ we have $e^{-t(A_{1}+A_{2})}=\sum^{\infty}_{n=0}S_{n}(t)$,
where the series converges in the (operator) norm of $B(\Ltwo(\G))$.
Fix $t>0$. To prove that the operator $e^{-t(A_{1}+A_{2})}\in\M(\Cz(\hat{\G}))$
actually belongs to $\Cz(\hat{\G})$, we show that for some approximate
identity $\left(e_{j}\right)_{j\in\mathcal{J}}$ of $\Cz(\hat{\G})$
we have $e^{-t(A_{1}+A_{2})}(\one-e_{j})\xrightarrow[j\in\mathcal{J}]{}0$
in the norm topology of $\Cz(\hat{\G})$. It suffices to show that
$\left\Vert S_{n}(t)(\one-e_{j})\right\Vert \xrightarrow[j\in\mathcal{J}]{}0$
for every $n\in\Z_{+}$. This is already known for $n=0$. Fix $n\in\Z_{+}$.
By SOT-continuity of $S_{n}(\cdot)$ we have $M:=\sup_{0\le s\le t}\left\Vert S_{n}(s)\right\Vert <\infty$.
Let $\e>0$ be given and set $t_{0}:=\min(\e,t)$. Since $\left\Vert e^{-t_{0}A_{1}}(\one-e_{j})\right\Vert \xrightarrow[j\in\mathcal{J}]{}0$,
there exists $j_{0}\in\mathcal{J}$ such that for all $j\ge j_{0}$
we have $\left\Vert e^{-t_{0}A_{1}}(\one-e_{j})\right\Vert \le\e$.
Consequently, for all $s\ge t_{0}$ and $j\ge j_{0}$ we have $\left\Vert e^{-sA_{1}}(\one-e_{j})\right\Vert \le\e$.
As a result, for all $j\ge j_{0}$, 
\begin{multline*}
\left\Vert S_{n+1}(t)(\one-e_{j})\right\Vert \\
\begin{split} & =\left\Vert \mathrm{SOT}\text{-}\int^{t_{0}}_{0}S_{n}(t-s)A_{2}e^{-sA_{1}}(\one-e_{j})\d s+\mathrm{SOT}\text{-}\int^{t}_{t_{0}}S_{n}(t-s)A_{2}e^{-sA_{1}}(\one-e_{j})\d s\right\Vert \\
 & \le M\left\Vert A_{2}\right\Vert \e(1+t-t_{0}).
\end{split}
\end{multline*}
This proves the desired conclusion.
\end{proof}

\begin{rem}
In the above proof of Theorem~\ref{thm:strong_amen_dual_non_T_Haagerup_chars}
we could have avoided the usage of the convolution operators $L^{\mathrm{u}}_{\mu},R^{\mathrm{u}}_{\mu}$
on $\CzU(\G)$ at the cost of using the Dirichlet form machinery of
\cite[Theorem~3.4]{Skalski_Viselter__convolution_semigroups} (see
Remark succeeding Theorem~\ref{thm:SV_JMPA_3.2_3.3_3.4}). This would
have made the proof a little shorter but more technically demanding.
\end{rem}

\section{Open questions}

In the sequel $\G$ denotes a second countable locally compact quantum
group. The first open question just fine tunes Question~\ref{ques:amen}
in the spirit of Theorem~\ref{thm:strong_amen_dual_non_T_Haagerup_chars}.
\begin{question}
Is it true that $\G$ is amenable and $\hat{\G}$ does not have property
(T) $\iff$ there exists a $w^{*}$-continuous, but not norm-continuous,
convolution semigroup of states $\left(\mu_{t}\right)_{t\ge0}$ on
$\G$ such that $\left\Vert \omega\star\mu_{t}-\mu_{t}\right\Vert \xrightarrow[t\to\infty]{}0$
for every state $\omega\in\Lone(\G)$?

Is it true that $\G$ is amenable and $\hat{\G}$ has the Haagerup
property $\iff$ there exists a $w^{*}$-continuous convolution semigroup
of states $\left(\mu_{t}\right)_{t\ge0}$ on $\G$ such that $\left\Vert \omega\star\mu_{t}-\mu_{t}\right\Vert \xrightarrow[t\to\infty]{}0$
for every state $\omega\in\Lone(\G)$ and such that $\lambda(\mu_{t})$
(or $\tilde{\lambda}(\mu_{t})$) belongs to $\Cz(\hat{\G})$ for each
$t>0$?
\end{question}

The second question has to do with a characterization of co-amenability
of $\hat{\G}$ in terms of convolution semigroups that may appear
more natural than that of Theorem~\ref{thm:strong_amen_chars}.
\begin{question}
\label{ques:strong_amen_in_terms_of_hat_G}\mbox{}
\begin{enumerate}
\item Is it true that $\G$ is strongly amenable (that is, $\hat{\G}$ is
co-amenable) $\iff$ there exists a $w^{*}$-continuous convolution
semigroup of states $\left(\hat{\mu}_{t}\right)_{t\ge0}$ on $\hat{\G}$
such that $\hat{\mu}_{t}\in\Lone(\hat{\G})$ for all $t>0$? 
\item Is it true that both $\G,\hat{\G}$ are strongly amenable $\iff$
there exists a $w^{*}$-continuous convolution semigroup of states
$\left(\mu_{t}\right)_{t\ge0}$ on $\G$ such that $\big\Vert\tilde{\lambda}(\omega\star\mu_{t}-\mu_{t})\big\Vert\xrightarrow[t\to\infty]{}0$
for every state $\omega\in\Lone(\G)$ and such that $\tilde{\lambda}(\mu_{t})$
has norm 1 and $\mu_{t}\in\Lone(\G)$ for all $t>0$? 
\end{enumerate}
\end{question}

Recalling that locally compact groups are always co-amenable, we ask
in particular whether the first (respectively, second) part of Question~\ref{ques:strong_amen_in_terms_of_hat_G}
holds true if $\G$ is the dual of a locally compact group (respectively,
either an amenable locally compact group or its dual).

Last but not least, we express our interest in finding a characterization
in a similar flavor for \emph{weak amenability}. That should be considerably
more difficult because this approximation property is not defined
in terms of states.

\section*{Acknowledgments}

The author thanks Adam Skalski for providing valuable comments on
a draft of the manuscript.

\section*{Use of AI tools}

The author acknowledges the use of Gemini 3.5/3.6 Flash (Google) and
the free version of ChatGPT based on GPT-5.5 (OpenAI) during the preparation
of this manuscript. These tools were used to assist with some technical
mathematical points and to improve the clarity, language, and presentation
of the manuscript. The author independently verified all mathematical
arguments and results and takes full responsibility for the accuracy
and content of this paper.

\bibliographystyle{amsplain}
\bibliography{AmenConvSemig}

\end{document}